\documentclass[11pt]{article}

\usepackage{amsmath}
\usepackage{amsfonts}
\usepackage{amssymb}
\usepackage{amsmath}
\usepackage{amsthm}
\usepackage{latexsym}
\usepackage{graphicx,cite}
\usepackage{float}
\usepackage{a4wide}




\usepackage{color}
\definecolor{hanblue}{rgb}{0.27, 0.42, 0.81}
\definecolor{red}{rgb}{1.0, 0.0, 0.0}
\usepackage[colorlinks, citecolor=hanblue,linkcolor=red]{hyperref}

\newtheorem{thm}{Theorem}[section]
\newtheorem{lem}[thm]{Lemma}
\newtheorem{prop}[thm]{Proposition}
\newtheorem{cor}[thm]{Corollary}

\theoremstyle{definition}
\newtheorem{defn}[thm]{Definition}

\newtheorem{prob}[thm]{Problem}

\theoremstyle{remark}
\newtheorem{rem}[thm]{Remark}

\numberwithin{equation}{section}

\usepackage{tikz,fp,ifthen,fullpage}
\usepackage{pgfmath}
\usetikzlibrary{backgrounds}
\usetikzlibrary{decorations.pathmorphing,backgrounds,fit,calc,through}
\usetikzlibrary{arrows}
\usetikzlibrary{shapes,decorations,shadows}
\usetikzlibrary{fadings}
\usetikzlibrary{patterns}
\usetikzlibrary{mindmap}
\usetikzlibrary{decorations.text}
\usetikzlibrary{decorations.shapes}

\setlength{\parindent}{0pt}

\title{Minimal elastic networks}


\author{Anna Dall'Acqua
\footnote{Institut f\"{u}r Analysis, Fakult\"{a}t f\"{u}r Mathematik und Wirtschaftswissenschaften,
Universit\"{a}t Ulm, 89081 Ulm, Germany}
\and Matteo Novaga
\footnote{Dipartimento di Matematica, Universit\`{a} di Pisa, Largo Bruno Pontecorvo 5,
56127 Pisa, Italy}
\and Alessandra Pluda
\footnote{Fakult\"{a}t f\"{u}r Mathematik, Universit\"{a}t Regensburg, Universit\"{a}tstrasse 31,
93053 Regensburg, Germany}
}

\begin{document}

\maketitle

\begin{abstract}
We consider planar networks of three curves that meet at two junctions
with prescribed equal angles, minimizing a combination of the elastic energy
and the length functional. We prove existence and regularity of minimizers,
and we show some properties of the minimal configurations.
\end{abstract}

%
\section{Introduction}
In this paper we are interested in the minimization of the elastic energy among
planar networks composed by three curves which
we call $3$--networks. 
In this still large class we consider the more special family of, what we call, 
Theta--networks. These are 
$3$--networks whose curves are of class
$H^2$, regular and form $120$ degrees at the two junctions. 

We consider an energy 
given by the sum of the  penalized elastic energies of each 
curve composing the network.
Parametrizing each curve by $\gamma^i$,
denoting with
$s^i$ is the arclength parameter and  with
 $k^i$ 
the (scalar) curvature, we set
\begin{equation}\label{functionalalphagen}
F _\alpha\left(\Gamma\right):=\sum_{i=1}^3\int_{\gamma^i}\left( (k^i)^{2}+\alpha\right){\rm{d}}s^i\,,
\qquad \alpha>0.
\end{equation}
Although  we are interested in this energy mainly 
 from a theoretical point of view, we remind that
it appears also in many mechanical and physical models (c.f.~\cite{Truesdell}) and in imaging
sciences, see for instance~\cite{Mumford1994}.

\smallskip

Before embarking on the description of our work, 
let us briefly review the known results in 
the case of single closed curves. 
We remark that if we
take $\alpha=0$ in~\eqref{functionalalphagen}
the infimum of the elastic energy among closed curves
is zero, but it is not a minimum.
Indeed consider a sequence of circles $\mathcal{C}_R$ with radius $R$,
then the energy
vanishes as $R\to \infty$, but the value zero is never attained.
Adding a length penalization to the functional 
is needed  to have a well-posed problem.
Due to the behaviour under rescaling of the elastic energy and of the length
it is easy to see that, up to rescaling, 
one can reduce to consider the functional $F_1$ (see~\cite{daplu} and Section 2).
The minimum of the energy $F_1$ is $4\pi$ and 
the unique minimizer is the circle of radius $1$.

Stationary curves  for the functional $F_1$ (or more generally $F_\alpha$)
are usually called  \emph{elasticae}. 
In~\cite{langsing1} 
Langer and Singer provide a 
complete classification of  elasticae.
In particular, 
the authors prove that 
the circle and  the ``Figure Eight" 
(or a multiple cover of one of these two)
are the unique closed planar elasticae~\cite[Theorem 0.1 (a)]{langsing1}. 
Let us observe that minimizing the elastic energy on closed curves 
with a fixed length constrained is, 
up to rescaling, equivalent to minimizing the penalized elastic energy on the same class of curves, see~\cite{daplu} for the details.
Moreover we notice that penalizing the length is not the unique way
to obtain a well-posed and non-trivial minimization
problem, for instance one can
penalize the area enclosed by the curves~\cite{bucurhenrot, ferkawni} 
(in this case the curves are necessary embedded) 
or confine the curves
in a bounded domain of $\mathbb{R}^2$~\cite{domuro,masnou}. 

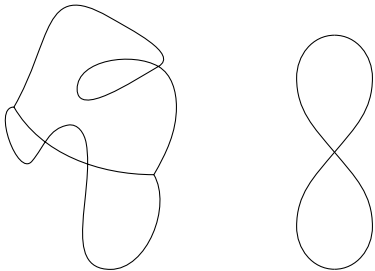
\begin{figure}[H]
\begin{center}
\begin{tikzpicture}[scale=1.2]
\draw[shift={(0,0)}] 
(-1.73,-1.8)
to[out= 180,in=-80, looseness=1] 
(-2,-0.4) 
to[out= 100,in=10, looseness=1] 
(-2.2,-0.2) 
to[out= -170,in=50, looseness=1] 
(-2.6,-0.6) 
to[out= -130,in=180, looseness=1] 
(-2.8,0) 
to[out= 60,in=150, looseness=1.5] (-1.75,1) 
(-2.8,0)
to[out=-60,in=180, looseness=0.9] (-1.25,-0.75)
(-1.75,1)
to[out= -30,in=30, looseness=0.9](-1.2,0.45)
(-2.1,0.2)
to[out= -90,in=-150, looseness=0.9] 
(-1.2,0.45)
(-2.1,0.2)
to[out= 90,in=150, looseness=0.9] 
(-1.2,0.45)
to[out=-30,in=90, looseness=0.9] (-1,0)
to[out= -90,in=60, looseness=0.9] (-1.25,-0.75)
to[out= -60,in=0, looseness=0.9](-1.73,-1.8);
\end{tikzpicture}\qquad\qquad
 \begin{tikzpicture}[scale=0.9]
\draw
(0,1.73)to[out= 0,in=90, looseness=1] (0.56,1.1)
(0.56,1.1)to[out= -90,in=50, looseness=1] (0,0);
\draw
(0,1.73)to[out= 180,in=90, looseness=1] (-0.56,1.1)
(-0.56,1.1)to[out= -90,in=130, looseness=1] (0,0);
\draw[rotate=180]
(0,1.73)to[out= 0,in=90, looseness=1] (0.56,1.1)
(0.56,1.1)to[out= -90,in=50, looseness=1] (0,0);
\draw[rotate=180]
(0,1.73)to[out= 180,in=90, looseness=1] (-0.56,1.1)
(-0.56,1.1)to[out= -90,in=130, looseness=1] (0,0);
\end{tikzpicture}
\end{center}
\caption{Left: A Theta--network. Right: The ``Figure Eight", that is, the unique
closed planar elastica with self-intersections.}\label{otto}
\end{figure}

Let us now come back to the networks' case.
Similarly to what happens with curves, 
if we state the minimization problem among regular $3$--networks of class $H^2$
again the infimum is zero and never attained.
To avoid such inconvenience
we restrict to the class of Theta--networks:
we ask that the unit tangent vectors to the three curves at the junctions form 
angles of $120$ degrees. In fact to find a lower bound on the energy
it would be enough to require that the angles are fixed and different from zero
(see also~\cite{daplu} for a long digression about 
the possible choices to get a well-posed problem).

\smallskip

Notice however that also in this class the problem has some form of degeneracy.
Indeed consider a minimizing sequence of Theta--networks $\Gamma_n$,
then as $n\to\infty$
the length of some curves could go to zero. 
More precisely we are able to prove that 
the length of at most one curve can go to zero.
The ($H^2$--weak) limit 
is either a Theta--network,
or it is composed by two drops 
(Definition~\ref{def:drop}) forming angles in pair of $120$ and $60$ degrees. 
A key point of the proof 
are careful estimates 
on the energy, 
based on a generalization
of the classical Gauss--Bonnet theorem (Theorem~\ref{stimamodulok}).

\smallskip

Being the class of Theta--networks not closed, 
we enlarge the class of networks into one where we
gain compactness.
We introduce the extension $\overline{F}$ of the original functional $F$
(Definition~\ref{funzionalerilassato}) defined for all $3$--networks
and we prove existence of minimizers for $\overline{F}$
(Corollary~\ref{esistenzatrenet}).

\medskip

This is clearly not enough to show existence of minimizers
for $F$ among Theta--network. To conclude the argument we prove that
$\overline{F}$ is the relaxation (w.r.t. the weak $H^2$ topology)
of $F$  and then we show that the minimizers of $\overline{F}$
actually belong to the initial class of Theta--networks.\\
The proof of the latter claim is quite tricky, and is one of the main results of this paper.
We briefly describe the structure of the argument.
We first establish the existence of a minimizer for the penalized elastic energy in the class of drops, and compute its energy. 
The unique minimizer (up to isometries of $\mathbb{R}^2$)
turns out to be  one of the two drops of 
the ``Figure Eight". Following the approach introduced in~\cite{explicitelasticae},
in~\cite{daplu} we show that the energy of the minimal drop
is approximately $10.60375$ (see Lemma~\ref{energiaotto}).  
Then we suppose by contradiction that a minimizer of $\overline{F}$ 
is composed by two drops.
Its energy is greater or equal than
the one of the ``Figure Eight". It is therefore enough to exhibit a 
Theta--network with strictly less energy 
to get the contradiction. The energy of the standard double bubble 
(easy to compute)  is strictly less  than the energy of the ``Figure Eight"
and this allows us to conclude the proof.



\medskip

The plan of the paper is the following:
In Section~\ref{basic} we introduce the penalized elastic energy functional for Theta--networks
and we state some basic properties of this energy.
Section~\ref{esistenza}  is devoted to prove existence of minimizers for the
relaxed functional. In Section~\ref{prop}
 we consider the minimization problem of the energy in the class
of drops. We describe
the unique minimizer (up to isometries of $\mathbb{R}^2$), whose energy is
computed in~\cite{daplu}.
Using this characterization
we are able to prove that
the minimizers of our problem for networks
are in the original smaller class of Theta--networks, and that
each curve of a minimizer is injective
(see Proposition~\ref{iniet}). We also shortly discuss the minimization problem for networks with two triple junctions and prescribed non-equal angles. We conclude the paper with an
estimate, contained in Appendix A, on the total variation of the curvature for
continuous, piecewise $W^{2,1}$, closed curves with self--intersections.
This result can be seen as a variant
of the  classical Gauss--Bonnet Theorem for curves.

\section*{Acknowledgments}

 Thanks to a fruitful discussion with Tatsuya Miura
 the proof of Proposition~\ref{minimogoccia} in the current version of the manuscript
 is more detailed than the one 
contained in the published version of this paper.

\section{Notation and preliminary definitions}\label{basic}

In this section we introduce the penalized elastic energy
and the notion of Theta--network.
We also list some useful properties,
whose proofs are based on routine computations 
(we refer to~\cite{daplu} for the details).


When we consider a curve $\gamma$, we mean
a parametrization  $\gamma:[0,1]\to\mathbb{R}^2$.
A curve is of class $C^k$ (or $H^k$) with $k=1,2,\ldots$
if it admits a parametrization $\gamma$ of class $C^k$ (or $H^k$,  respectively).
A curve at least of class $C^1$ 
is said to be regular if $\vert \partial_x \gamma(x) \vert\neq 0$ for every $x\in[0,1]$.

For regular curves we denote by $s$ the arclength parameter and use that 
$\partial_s=\frac{\partial_x}{\vert \partial_x \gamma\vert}$.

\begin{itemize}
\item[-] If a curve $\gamma$ is at least of class $C^1$ and regular we are allowed to speak of its
unit tangent vector
${\tau}=\partial_{s}\gamma=\frac{\partial_x\gamma}{\left|\partial_x \gamma\right|}$.
\item[-] The unit normal vector $\nu$ to $\gamma$ is defined as the 
anticlockwise rotation of $\frac\pi2$ of $\tau$.
\item[-] If a curve $\gamma$ is at least of class $C^2$ and regular we define its scalar curvature as the scalar function $k$ such that $\partial_s^2 \gamma = k\nu$.
\end{itemize}

Notice that  we will adopt the following convention for integrals,
$$
\int_{\gamma}f(\gamma,\tau,\nu,...)\,\mathrm{d}s
=\int_{0}^1f(\gamma,\tau,\nu,...)\vert \partial_x\gamma\vert\,\mathrm{d}x\,,
$$
as the arclength measure is given by $\mathrm{d}s=\vert \partial_x\gamma\vert\mathrm{d}x$
on every curve $\gamma$.

\subsection*{$3$--networks and Theta--networks}

\begin{defn}\label{3network}
A $3$--network is a connected set in the plane, union of three curves 
that  meet at two triple junctions.
A $3$--network is said to be 
\emph{regular} if  every curve
admits a parametrization 
 $\gamma^i$ which is regular, 
\emph{injective} if every curve is injective and \emph{of class $H^2$} if every curve is of class $H^2$.

A \emph{Theta--network} is a $3$--network such that the three curves are
of class $H^2$, and meet at the triple junctions with angles of $120$ degrees.
\end{defn}

Notice that the curves of a Theta--network
can intersect each other or have self-intersections.

Calling $P^1,P^2$ the two triple junctions,
without loss of generality we can parametrized the three curves in such a way that 
$P^1=\gamma^1(0)=\gamma^2(0)=\gamma^3(0)$ and 
$P^2=\gamma^1(1)=\gamma^2(1)=\gamma^3(1)$.
It is also not restrictive to require that $P^1$ coincides with the origin, and 
to fix the unit tangent vectors 
to the curves at $P^1$ as
$\tau^1(0)=(1/2,\sqrt{3}/2)$, $\tau^2(0)=(-1,0)$ and $\tau^3(0)=(1/2,-\sqrt{3}/2)$.
The only information we have at $P^2$
is that $\tau^1(1)+\tau^2(1)+\tau^3(1)=0$.

\begin{figure}[H]
\begin{center}
\begin{tikzpicture}[scale=1.2]
\draw[shift={(0,0)}] 
(-1.73,-1.8)
to[out= 180,in=-80, looseness=1] 
(-2,-0.4) 
to[out= 100,in=10, looseness=1] 
(-2.2,-0.2) 
to[out= -170,in=50, looseness=1] 
(-2.6,-0.6) 
to[out= -130,in=180, looseness=1] 
(-2.8,0) 
to[out= 60,in=150, looseness=1.5] (-1.75,1) 
(-2.8,0)
to[out=-60,in=180, looseness=0.9] (-1.25,-0.75)
(-1.75,1)
to[out= -30,in=30, looseness=0.9](-1.2,0.45)
(-2.1,0.2)
to[out= -90,in=-150, looseness=0.9] 
(-1.2,0.45)
(-2.1,0.2)
to[out= 90,in=150, looseness=0.9] 
(-1.2,0.45)
to[out=-30,in=90, looseness=0.9] (-1,0)
to[out= -90,in=60, looseness=0.9] (-1.25,-0.75)
to[out= -60,in=0, looseness=0.9](-1.73,-1.8);
\fill[black](-1.25,-0.75) circle (1pt);
\fill[black] (-2.8,0)  circle (1pt);
\path[shift={(0,0)}] 
(-1.25,-0.75)node[right]{$P^1$}
 (-1.5,-0.35)[left] node{$\gamma^2$}
 (-0.6,.9)[left] node{$\gamma^1$}
 (-2,-1.45)[left] node{$\gamma^3$}
 (-3,0.4) node[below] {$P^2$};
  \draw[thick, scale=1, shift={(-2.29,-0.5)}, rotate=60]
(0,0)to[out= -45,in=135, looseness=1] (0.1,-0.1)
(0,0)to[out= -135,in=45, looseness=1] (-0.1,-0.1);
\draw[ thick, shift={(-1.04,-0.3)}, scale=1, rotate=-30]
(0,0)to[out= -45,in=135, looseness=1] (0.1,-0.1)
(0,0)to[out= -135,in=45, looseness=1] (-0.1,-0.1);
\draw[thick, shift={(-1.73,-1.8)}, scale=1, rotate=90]
(0,0)to[out= -45,in=135, looseness=1] (0.1,-0.1)
(0,0)to[out= -135,in=45, looseness=1] (-0.1,-0.1);
 \end{tikzpicture}
\begin{tikzpicture}[scale=1.2]
\draw[shift={(0,0)}] 
(-2.3,-0.3) 
to[out= 0,in=120, looseness=1] (-2,-0.7) 
to[out= -120,in=150, looseness=1.5] (-1.5,1) 
(-2,-0.7)
to[out=0,in=180, looseness=0.9] (-1.25,-0.75)
(-1.5,1)
to[out= -30,in=90, looseness=0.9] (-1,0)
to[out= -90,in=60, looseness=0.9] (-1.25,-0.75)
to[out= -60,in=0, looseness=0.9](-1.73,-1.8)
(-1.73,-1.8)to[out=180,in=180, looseness=1] (-2.3,-0.3) ;
 \draw[thick, scale=1, shift={(-1.59,-0.75)}, rotate=90]
(0,0)to[out= -45,in=135, looseness=1] (0.1,-0.1)
(0,0)to[out= -135,in=45, looseness=1] (-0.1,-0.1);
\draw[ thick, shift={(-1.04,-0.3)}, scale=1, rotate=-30]
(0,0)to[out= -45,in=135, looseness=1] (0.1,-0.1)
(0,0)to[out= -135,in=45, looseness=1] (-0.1,-0.1);
\draw[thick, shift={(-1.73,-1.8)}, scale=1, rotate=90]
(0,0)to[out= -45,in=135, looseness=1] (0.1,-0.1)
(0,0)to[out= -135,in=45, looseness=1] (-0.1,-0.1);
\fill[black](-1.25,-0.75) circle (1pt);
\fill[black] (-2,-0.7)  circle (1pt);
\path[shift={(0,0)}] 
(-1.25,-0.75)node[right]{$P^1$}
 (-1.35,-0.5)[left] node{$\gamma^2$}
 (-0.6,.9)[left] node{$\gamma^1$}
 (-2.26,-1.45)[left] node{$\gamma^3$}
 (-1.85,-0.62) node[below] {$P^2$};
 \end{tikzpicture}\qquad
\begin{tikzpicture}[scale=1.2]
\path[shift={(0,0)}] 
  (0,0.6)node[right]{$P^1=\gamma^1(0)=\gamma^2(0)=\gamma^3(0)$}
  (0,0.1)node[right]{$P^2=\gamma^1(1)=\gamma^2(1)=\gamma^3(1)$}
 (0,-0.4)node[right]{$\tau^1(0)+\tau^2(0)+\tau^3(0)=0$}
  (0,-0.9)node[right]{$\tau^1(1)+\tau^2(1)+\tau^3(1)=0$};
 \end{tikzpicture}
\end{center}
\caption{Two examples of Theta-networks. Notice that the curves can 
intersect and self-intersect in their interior. The triple junctions, denoted by $P^i$, are only at the end points of the curves.}\label{doppiabolla}
\end{figure}

\subsection*{Penalized elastic energy}

Given a Theta--network $\Gamma$
we let
 $L(\Gamma):=\sum_{i=1}^3\int_{\gamma^i}1\,{\rm{d}}s^i$ be its total length,
  $L(\gamma^i)$ be the length of the $i$-th curve of the network,
 and $E(\Gamma):=\sum_{i=1}^N\int_{\gamma^i}\left(k^i\right)^2\,{\rm{d}}s^i$ 
 be its total elastic energy. Then
 we consider the penalized  elastic energy, defined as:
\begin{equation}\label{Ealpha}
F_\alpha(\Gamma):=
E(\Gamma)+\alpha L(\Gamma), \qquad \alpha>0.
\end{equation}

We are interested in 
the following minimization problem:
\begin{prob}\label{Pdoppiabolla}
Is 
\begin{equation*}
\inf\lbrace F_1\left(\Gamma\right)\vert\; \Gamma\;
\text{is a Theta--network}\rbrace\, \mbox{ attained?}
\end{equation*}
\end{prob}

\subsubsection*{Reduction to the case $\alpha=1$}

It is not restrictive to consider only $F_1$ in Problem~\ref{Pdoppiabolla}.
Indeed, for any Theta--network $\Gamma$, using the scaling properties of the length and of the elastic energy we have
\begin{equation}
F_{1}(\Gamma)
=\alpha^{-\frac12}\, F_{\alpha}\left(\alpha^{-\frac12}\Gamma\right)\, \mbox{ for any }\alpha>0.
\end{equation}
As a consequence if  $\Gamma_1$ 
is a stationary point for $F_{1}$,
then the rescaled network $\Gamma_\alpha:=\alpha^{-\frac12}\,\Gamma_1$ 
is a stationary point for  $F_{\alpha}$, and viceversa.

Hence, from now on we can fix $\alpha=1$
and consider the energy 
\begin{equation*}
F \left(\Gamma\right):=F_1 \left(\Gamma\right)\,.
\end{equation*}
In the following, with a little abuse of notation, we consider the same functional not only on Theta-networks but also on continuous piecewise $H^2$ curves. In all cases $F$ is given by the sums of the elastic energy and of the length for each $H^2$--piece of curve.

\subsubsection*{Optimal rescaling and equipartition of the energy for the minimizers}

Consider a Theta--network $\Gamma$
and the rescaled network $\widetilde{\mathrm{R}}\Gamma$ of $\Gamma$ with the factor 
$\widetilde{\mathrm{R}}:=\sqrt{\frac{E(\Gamma)}{ L(\Gamma)}}$ with $E(\Gamma)$ and $L(\Gamma)$ as defined above \eqref{Ealpha}.
Then for every rescaling of factor $\mathrm{R}>0$ of $\Gamma$ it holds
\begin{equation*}
F(\widetilde{\mathrm{R}}\Gamma)\leq F(\mathrm{R}\Gamma)\,,
\end{equation*}
that is, the network $\widetilde{\mathrm{R}}\Gamma$ is the optimal rescaling (for the energy $F$) of the network $\Gamma$.

We notice that for each optimal rescaled network, and in particular for  the minimizers of $F$ 
(if there exist), there is an equipartition of the energy.
Indeed,
called $\widetilde{\mathrm{R}}$ the optimal rescaling  of a  Theta--network $\Gamma$,
then
\begin{equation}\label{equie}
E(\widetilde{\mathrm{R}}\Gamma)=\sqrt{E(\Gamma)L(\Gamma)}=
L(\widetilde{\mathrm{R}}\Gamma)\,,
\end{equation}
so that 
$$F(\widetilde{\mathrm{R}}\Gamma)= 2 E(\widetilde{\mathrm{R}}\Gamma) = 2 \sqrt{E(\Gamma)L(\Gamma)} \, . $$

\begin{rem} Also in the case of networks, using scaling arguments one can see that minimizing the elastic energy with a penalization of the length is equivalent, up to rescaling, to the minimization problem with
a fixed length constraint on the entire network. For more details see \cite[Sec.5]{daplu}.
\end{rem}

\subsubsection*{Reparametrization with constant velocity}

From now on 
we require that the regular curves 
of a Theta--network are parametrized 
on $[0,1]$
with constant speed equal to the length. 
In this case 
the functional~\eqref{Ealpha} can be written as
\begin{equation}\label{velcost}
F\left(\Gamma\right)
=\sum_{i=1}^3 \frac{1}{L^3(\gamma^i)}\int_{0}^{1}(\gamma_{xx}^i)^2\,{\rm{d}}x
+ L(\gamma^i)\,.
\end{equation}

\subsubsection*{Upper bound on the energy}   
The standard double bubble $\mathcal{B}_{\overline{r}}$
composed by a segment and 
two circular arcs of radius 
$\overline{r}=\sqrt{\frac{8\pi}{3\sqrt{3}+8\pi}}$
is the optimal rescaling of the standard double bubble and hence the one of less energy between all possible standard double bubble. 
Its energy is given by
\begin{equation}\label{energiadellabolla} 
E(\mathcal{B}_{\overline{r}})=\frac{2}{3}\sqrt{8\pi(8\pi+3\sqrt{3})}\approx 18.4059\,.
\end{equation}
Then clearly 
\begin{equation}\label{bddenergy}
\inf\lbrace F\left(\Gamma\right)\vert\; \Gamma\;
\text{is a Theta--network}\rbrace\,\leq \frac{2}{3}\sqrt{8\pi(8\pi+3\sqrt{3})}<+\infty \, .
\end{equation}

          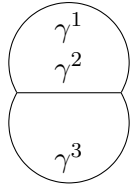
\begin{figure}[H]
\begin{center}
\begin{tikzpicture}[scale=0.8]
\draw[color=black,scale=1,domain=-2.09: 2.09,
smooth,variable=\t,shift={(0,0)},rotate=0]plot({1.*sin(\t r)},
{1.*cos(\t r)}) ; 
\draw[color=black,scale=1,domain=-2.09: 2.09,
smooth,variable=\t,shift={(0,-1)},rotate=180]plot({1.*sin(\t r)},
{1.*cos(\t r)}) ; 
\draw
(-0.87,-0.5)--(0.87,-0.5);
\path[shift={(0,0)}] 
 (0,-0.5)[above] node{$\gamma^2$}
 (0,1)[below] node{$\gamma^1$}
 (0,-2)[above] node{$\gamma^3$};
\end{tikzpicture}
\end{center}    
              \caption{Standard double bubble}\label{bubble}
          \end{figure}

\subsubsection*{Lower bound on the energy} 
\begin{lem}\label{formula2c}
Let $\gamma:[0,1]\to\mathbb{R}^2$ be a regular, continuous, piecewise $H^2$ curve
and suppose that there exists a strictly positive constant $c$ such that 
$\int_{\gamma}\vert k \vert\, {\rm{d}}s\geq c$, then $F(\gamma)\geq 2c$. 
\end{lem}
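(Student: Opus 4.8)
The plan is to exploit the elementary inequality $a^2 + b \geq 2\sqrt{ab}$ (AM--GM) applied pointwise in the right place, combined with the Cauchy--Schwarz inequality to relate $\int_\gamma |k|\,\mathrm{d}s$ to $\sqrt{E(\gamma)\,L(\gamma)}$. First I would recall that $F(\gamma) = E(\gamma) + L(\gamma) = \int_\gamma k^2\,\mathrm{d}s + \int_\gamma 1\,\mathrm{d}s$, where the integrals are taken as sums over the $H^2$--pieces of $\gamma$. By the arithmetic--geometric mean inequality applied to the two nonnegative numbers $E(\gamma)$ and $L(\gamma)$, we get
\begin{equation*}
F(\gamma) = E(\gamma) + L(\gamma) \geq 2\sqrt{E(\gamma)\,L(\gamma)}\,.
\end{equation*}
This is exactly the equipartition bound already observed before~\eqref{equie}: the right-hand side is $F$ evaluated at the optimal rescaling of $\gamma$, so up to rescaling we may as well assume equipartition holds, and in any case the inequality above is all we need.

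Next I would bound $\sqrt{E(\gamma)\,L(\gamma)}$ from below by $\int_\gamma |k|\,\mathrm{d}s$ using Cauchy--Schwarz on each $H^2$--piece and summing. Indeed, on a single piece,
\begin{equation*}
\int |k|\,\mathrm{d}s = \int |k|\cdot 1\,\mathrm{d}s \leq \left(\int k^2\,\mathrm{d}s\right)^{1/2}\left(\int 1\,\mathrm{d}s\right)^{1/2}\,,
\end{equation*}
and summing over the pieces together with the Cauchy--Schwarz inequality for finite sums (or simply $\sum \sqrt{a_i b_i} \leq \sqrt{\sum a_i}\sqrt{\sum b_i}$) gives $\int_\gamma |k|\,\mathrm{d}s \leq \sqrt{E(\gamma)\,L(\gamma)}$. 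Combining the two displayed estimates yields
\begin{equation*}
F(\gamma) \geq 2\sqrt{E(\gamma)\,L(\gamma)} \geq 2\int_\gamma |k|\,\mathrm{d}s \geq 2c\,,
\end{equation*}
which is the claim.

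There is essentially no obstacle here: the only point requiring a word of care is the bookkeeping for the piecewise structure, namely that $E$, $L$ and the total curvature $\int_\gamma |k|\,\mathrm{d}s$ are all defined as finite sums over the $H^2$--pieces, so that both the AM--GM step (applied to the totals) and the Cauchy--Schwarz step (applied piecewise and then summed) are legitimate. One should also note that regularity of $\gamma$ guarantees the arclength parametrization is well defined on each piece, so all the integrals make sense. Everything else is the two-line chain of inequalities above.
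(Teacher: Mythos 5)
Your proof is correct and is essentially the paper's own argument: both rest on H\"older/Cauchy--Schwarz giving $\int_\gamma |k|\,{\rm d}s \le \sqrt{E(\gamma)L(\gamma)}$ together with the AM--GM inequality (the paper writes it as $F(\gamma)\ge c^2/L(\gamma)+L(\gamma)\ge 2c$, you as $F(\gamma)\ge 2\sqrt{E(\gamma)L(\gamma)}\ge 2c$), which are the same estimate in a different order. Your extra remark on summing Cauchy--Schwarz over the $H^2$--pieces is a harmless bookkeeping elaboration of what the paper does implicitly.
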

\begin{proof}
Using H\"{o}lder inequality we get 
$$
c\leq \int_{\gamma}\vert k \vert\, {\rm{d}}s\leq 
\left(\int_\gamma k^2 \, {\rm{d}}s\right)^{\frac12} (L(\gamma))^{\frac12} = (E(\gamma) L(\gamma))^{\frac12}\,,
$$  
then
\begin{equation}
 F(\gamma)=E(\gamma)+L(\gamma)\geq \frac{c^2}{L(\gamma)}+L(\gamma)\geq 2c\,.
\end{equation}
\end{proof}

\begin{lem}
Let $\Gamma$ be a Theta--network. Then $F(\Gamma) \geq 4 \pi \approx 12.5663..$.
\end{lem}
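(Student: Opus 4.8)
The plan is to reduce the claim to the lower bound $\sum_{i=1}^3\int_{\gamma^i}\vert k^i\vert\,\mathrm{d}s^i\geq 2\pi$ on the total curvature of the network, obtained by a Gauss--Bonnet type argument, and then to conclude as in Lemma~\ref{formula2c}. The first step is to associate to $\Gamma$ three auxiliary closed curves: for each pair $\{i,j\}\subset\{1,2,3\}$ with $i\neq j$, let $\Gamma_{ij}$ be the closed curve obtained by running $\gamma^i$ from $P^1$ to $P^2$ and then the reversal of $\gamma^j$ back from $P^2$ to $P^1$. Each $\Gamma_{ij}$ is a closed, regular, continuous and piecewise $H^2$ curve whose only possible corners are the two junctions $P^1$ and $P^2$ (self--intersections may occur and are harmless here).

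Next I would compute the turning (exterior) angles of $\Gamma_{ij}$ at the junctions: at $P^1$ the curve arrives with unit tangent $-\tau^j(0)$ and departs with $\tau^i(0)$, and at $P^2$ it arrives with $\tau^i(1)$ and departs with $-\tau^j(1)$. Since at each junction the three unit tangents sum to zero they make mutual angles of $120$ degrees, so in both cases the arrival and departure directions form an angle of $60$ degrees; hence each of the two turning angles of $\Gamma_{ij}$ has modulus $\pi/3$. The Gauss--Bonnet type estimate of Theorem~\ref{stimamodulok} — which guarantees that the total curvature of a closed piecewise $H^2$ curve, namely the integral of $\vert k\vert$ along its regular arcs plus the sum of the moduli of the turning angles at its corners, is at least $2\pi$ — then gives $\int_{\gamma^i}\vert k^i\vert\,\mathrm{d}s^i+\int_{\gamma^j}\vert k^j\vert\,\mathrm{d}s^j\geq 2\pi-\frac{\pi}{3}-\frac{\pi}{3}=\frac{4\pi}{3}$. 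Adding these three inequalities (one per pair), in which each curve $\gamma^i$ occurs exactly twice, and dividing by $2$ yields $\sum_{i=1}^3\int_{\gamma^i}\vert k^i\vert\,\mathrm{d}s^i\geq 2\pi$.

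To conclude I would argue curve by curve as in Lemma~\ref{formula2c}: H\"older's inequality gives $\int_{\gamma^i}\vert k^i\vert\,\mathrm{d}s^i\leq\bigl(E(\gamma^i)L(\gamma^i)\bigr)^{1/2}$ and the arithmetic--geometric mean inequality gives $E(\gamma^i)+L(\gamma^i)\geq 2\bigl(E(\gamma^i)L(\gamma^i)\bigr)^{1/2}$, so that
\[
F(\Gamma)=\sum_{i=1}^{3}\bigl(E(\gamma^i)+L(\gamma^i)\bigr)\geq 2\sum_{i=1}^{3}\bigl(E(\gamma^i)L(\gamma^i)\bigr)^{1/2}\geq 2\sum_{i=1}^{3}\int_{\gamma^i}\vert k^i\vert\,\mathrm{d}s^i\geq 4\pi\,.
\]

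I expect the only delicate point to be the bookkeeping in the Gauss--Bonnet step: one must make sure that Theorem~\ref{stimamodulok} applies to the possibly non-embedded closed curves $\Gamma_{ij}$ with their two corners, and that the turning angles at $P^1$ and $P^2$ are taken with the correct orientation, so that the lower bound $2\pi$ for the total curvature of $\Gamma_{ij}$ genuinely holds; the rest is just H\"older's and AM--GM inequalities.
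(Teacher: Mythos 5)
Your proof is correct and follows essentially the same route as the paper: you form the three pairwise closed curves $\Gamma_{i,j}$, note the two turning angles of $\pi/3$ at the junctions so that Theorem~\ref{stimamodulok} (i.e.\ Corollary~\ref{variantGB}) gives $\int_{\Gamma_{i,j}}\vert k\vert\,\mathrm{d}s\geq \frac{4\pi}{3}$, and then combine with H\"older/AM--GM as in Lemma~\ref{formula2c}. The only (harmless) difference in bookkeeping is that you first sum the curvature bounds to get $\sum_i\int_{\gamma^i}\vert k^i\vert\,\mathrm{d}s^i\geq 2\pi$ and then argue curve by curve, whereas the paper applies Lemma~\ref{formula2c} to each pair $\Gamma_{i,j}$ and averages the resulting bounds $F(\Gamma_{i,j})\geq\frac{8\pi}{3}$.
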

\begin{proof}
Let  $\Gamma=\{\gamma^1,\gamma^2,\gamma^3\}$
be a Theta--network and consider the piecewise $H^2$ closed curves $\Gamma_{i,j}=\{\gamma^i, \gamma^j\}$ with  $i\neq j\in\{1,2,3\}$. 
By Corollary~\ref{variantGB}, we get that 
$
 \int_{\Gamma_{i,j}} \vert k\vert \,{\rm{d}}s\geq \frac{4\pi}{3}\,.
$
and  by the previous Lemma $F(\Gamma_{i,j})\geq\frac{8\pi}{3}$.
Then 
\begin{equation}\label{lowerbound}
F(\Gamma)=\frac12\left(F(\Gamma_{1,2})+F(\Gamma_{2,3})+F(\Gamma_{3,1})\right)\geq 4\pi\,.
\end{equation}
\end{proof}

\section{Existence of minimizers}\label{esistenza}

As explained in the Introduction we introduce now the functional $\overline{F}$, 
an extension of $F$ to all $3$-networks of class $H^2$.
We prove existence of a minimizer for $\overline{F}$ in this larger space and show that  $\overline{F}$ is the relaxation of $F$ with respect to $H^2$ weak convergence.

\begin{defn}\label{classNandM}
A  \textit{``degenerate'' 
Theta--network} is a network  composed by two regular curves $\gamma^1,\gamma^2$
of class $H^2$,
forming angles in pairs of $120$ and $60$
degrees
and by a curve $\gamma^3$ of length zero.
\end{defn}

\begin{figure}[H]
\begin{center}
\begin{tikzpicture}[scale=2]
\draw[white]
(0,-1.1)--(1,-1.1);
\draw[thick]
(0,0)to[out= 60,in=0, looseness=1] (0,0.75)
(0,0.75)to[out= 180,in=120, looseness=1] (0,0)
(0,0)to[out= -60,in=90, looseness=1] (0.3,-0.55)
(0.3,-0.55)to[out= -90,in=0, looseness=1] (0,-0.9)
(0,0.75)to[out= 180,in=120, looseness=1] (0,0)
(0,0)to[out= -120,in=90, looseness=1] (-0.4,-0.65)
(-0.4,-0.65)to[out= -90,in=180, looseness=1] (0,-0.9);
\fill[black](0,0) circle (0.7pt);
\path[shift={(0,0)}] 
(0,0)[right] node{$P$}
 (0.13,0.45)[right] node{$\gamma^1$}
 (0.3,-0.6)[right] node{$\gamma^2$};
\end{tikzpicture}\qquad\quad
\begin{tikzpicture}[scale=2]
\draw[thick]
(0,0)to[out= 60,in=0, looseness=1] (0,0.75)
(0,0.75)to[out= 180,in=120, looseness=1] (0,0)
(0,0)to[out= -60,in=-90, looseness=1] (0.7,0.55)
(0.7,0.55)to[out= 90,in=0, looseness=1] (0,1.5)
(0,0)to[out= -120,in=-90, looseness=1] (-0.8,0.65)
(-0.8,0.65)to[out= 90,in=180, looseness=1] (0,1.5);
\fill[black](0,0) circle (0.7pt);
\path[shift={(0,0)}] 
(0,-0.07)[below] node{$P$}
 (0.13,0.45)[right] node{$\gamma^1$}
 (0.6,1)[right] node{$\gamma^2$};
\end{tikzpicture}\qquad\qquad\;
\begin{tikzpicture}[scale=2]
\draw[thick, color=black,scale=0.3,domain=-3.15: 3.15,
smooth,variable=\t,rotate=30, shift={(-1,0)}]plot({1.*sin(\t r)},
{2.*cos(\t r)}) ; 
\draw[thick, color=black,scale=0.3,domain=-3.15: 3.15,
smooth,variable=\t,rotate=-30, shift={(-0.6,0.4)}]plot({0.6*sin(\t r)},
{2.2*cos(\t r)}) ; 
\fill[black](0,0) circle (0.7pt);
\path[shift={(0,0)}] 
(0,-0.07)[right] node{$P$}
 (0.23,0.45)[right] node{$\gamma^1$}
 (-0.63,-0.65)[right] node{$\gamma^2$};
 \draw[white]
(0,-0.9)--(1,-0.9);
\end{tikzpicture}
\end{center}    
              \caption{Three examples of ``degenerate'' Theta--network.
              Notice that the curves can 
intersect and self-intersect in their interior. The four points, denoted by $P$,
are only at the end points of the curves.}\label{exinM}
          \end{figure}

From now on we parametrized all the regular curves of a $3$--network on $[0,1]$
and with constant speed equal to their length. 
In particular,
the energy $F$ on regular $3$--networks is given as in~\eqref{velcost}.

\begin{defn}\label{funzionalerilassato}
We define the  functional  $\overline{F}$ 
on $3$--networks $\Gamma$ of class $H^2$ 
as follows
$$
\overline{F}(\Gamma)=
\begin{cases}
\sum_{i=1}^3
\frac{1}{L^3(\gamma^i)}
\int_{0}^1 (\gamma_{xx}^i)^{2}\, {\rm{d}}x+ L(\gamma^i) \qquad \text{if}\; \Gamma
\;\text{is Theta--network, }\\
\sum_{i=1}^2
\frac{1}{L^3(\gamma^i)}
\int_{0}^1 (\gamma_{xx}^i)^{2}\, {\rm{d}}x+ L(\gamma^i)  \qquad \text{if}\; \Gamma
\;\text{is a ``degenerate'' 
Theta network, }\\
+\infty \qquad\qquad\qquad\;\;\,\quad\qquad\quad\quad\quad\;\;\,\,\,\, \text{otherwise}
\end{cases}
$$
\end{defn}

By definition the two functionals $F$ and $\overline{F}$
coincide on Theta--networks.
Notice that when we speak of the set of all 
$3$--networks of class $H^2$ we
do not ask that the lengths of the three curves constituting the network are positive. 
In particular, we do not require regularity of the curves.

\begin{thm}\label{rilassato}
The functional $\overline{F}$ is the relaxation of the functional $F$
in $H^2$,
that is for any $3$-network $\Gamma$ of class $H^2$
$$
\overline{F}(\Gamma)=\inf\{\liminf_{n\to\infty}{F}(\Gamma_n)
:\Gamma_n\rightharpoonup\Gamma\,\text{weakly in}\, H^2,\;\text{with}\;\Gamma_n\;\text{Theta--network}\,\}\,.
$$

\end{thm}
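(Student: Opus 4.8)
The plan is to prove the two standard inequalities that characterize a relaxed functional: a liminf inequality (lower semicontinuity along Theta-network sequences) and a recovery-sequence inequality (existence of an approximating sequence attaining the value $\overline{F}(\Gamma)$ in the limit). Denote by $\mathcal{F}_{rel}(\Gamma)$ the infimum on the right-hand side. First I would observe that the claim is vacuous unless $\Gamma$ admits at least one sequence of Theta-networks converging weakly in $H^2$; since any $3$-network of class $H^2$ that is neither a Theta-network nor a ``degenerate'' Theta-network cannot be such a limit (the $120$ degree angle conditions and the triple-junction concurrence conditions, and the dichotomy described in the Introduction that at most one length degenerates, all pass to weak $H^2$-limits via the compact embedding $H^2([0,1])\hookrightarrow C^1([0,1])$), both sides equal $+\infty$ there and the statement holds trivially. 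So the real content is for $\Gamma$ a Theta-network or a ``degenerate'' Theta-network.

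For the liminf inequality, take any sequence of Theta-networks $\Gamma_n\rightharpoonup\Gamma$ weakly in $H^2$ with $\liminf_n F(\Gamma_n)<+\infty$; passing to a subsequence we may assume the liminf is a limit and is finite. Since we parametrize all regular curves on $[0,1]$ with constant speed equal to their length, weak $H^2$ convergence of $\gamma^i_n$ forces $L(\gamma^i_n)\to L(\gamma^i)$ (the lengths are $\|\partial_x\gamma^i_n\|$, constant in $x$, and converge), and the second derivatives converge weakly in $L^2(0,1)$. If all three limit lengths are positive, then $\Gamma$ is a Theta-network, each term $\tfrac{1}{L^3(\gamma^i_n)}\int_0^1(\gamma^i_{n,xx})^2\,\mathrm{d}x + L(\gamma^i_n)$ is lower semicontinuous by weak $L^2$-lower semicontinuity of $v\mapsto\int v^2$ combined with $L^3(\gamma^i_n)\to L^3(\gamma^i)>0$, and we get $\liminf_n F(\Gamma_n)\ge \sum_{i=1}^3\big(\tfrac{1}{L^3(\gamma^i)}\int_0^1(\gamma^i_{xx})^2 + L(\gamma^i)\big)=\overline{F}(\Gamma)$. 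If exactly one length, say $L(\gamma^3_n)$, tends to $0$, then by the dichotomy the limit is a ``degenerate'' Theta-network; the nonnegative term coming from $\gamma^3_n$ can only help, so dropping it gives $\liminf_n F(\Gamma_n)\ge \sum_{i=1}^2\big(\tfrac{1}{L^3(\gamma^i)}\int_0^1(\gamma^i_{xx})^2 + L(\gamma^i)\big)=\overline{F}(\Gamma)$; the case of two or more vanishing lengths cannot occur. Hence $\mathcal{F}_{rel}(\Gamma)\ge\overline{F}(\Gamma)$ in all cases.

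For the recovery inequality $\mathcal{F}_{rel}(\Gamma)\le\overline{F}(\Gamma)$, if $\Gamma$ is already a Theta-network take the constant sequence $\Gamma_n=\Gamma$, which is admissible and gives $\lim_n F(\Gamma_n)=F(\Gamma)=\overline{F}(\Gamma)$. The substantive case is $\Gamma$ a ``degenerate'' Theta-network with curves $\gamma^1,\gamma^2$ meeting at angles $120$ and $60$ and a missing third curve: here I would explicitly construct Theta-networks $\Gamma_n$ by inserting a small connecting curve $\gamma^3_n$ of length $\ell_n\to 0$ between the two coincident junctions of $\gamma^1,\gamma^2$, and perturbing $\gamma^1,\gamma^2$ near those junctions on a shrinking scale so that the resulting network satisfies the $120$ degree condition at both triple junctions. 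One picks $\gamma^3_n$ to be, say, a short arc whose tangents at the endpoints are prescribed by the $120$ degree conditions (a circular arc, or more simply a suitably bent curve) with $\int_{\gamma^3_n}(k)^2\,\mathrm{d}s \to 0$: this is possible because on a curve of length $\ell_n$ one can interpolate any two unit tangent directions with bending energy of order $1/\ell_n$, so choosing $\ell_n\to 0$ slowly enough (e.g. $\ell_n = n^{-1}$ while the required turning stays bounded) makes the elastic contribution $O(\ell_n)\to 0$ while the length contribution is $\ell_n\to 0$; the boundary-layer modification of $\gamma^1,\gamma^2$ is done on an interval of parameter-length $o(1)$ so it costs energy $o(1)$ and converges to the original curves weakly, indeed strongly, in $H^2$. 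Then $\Gamma_n$ are genuine Theta-networks, $\Gamma_n\rightharpoonup\Gamma$ in $H^2$, and $F(\Gamma_n)\to \sum_{i=1}^2\big(\tfrac{1}{L^3(\gamma^i)}\int_0^1(\gamma^i_{xx})^2 + L(\gamma^i)\big)=\overline{F}(\Gamma)$.

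The main obstacle is this last construction: one must simultaneously (i) close up the gap with a short curve whose bending energy vanishes, (ii) enforce the exact $120$ degree angle conditions at \emph{both} triple junctions after the perturbation, and (iii) keep the perturbed $\gamma^1_n,\gamma^2_n$ converging to $\gamma^1,\gamma^2$ in the weak $H^2$ topology with no loss of energy in the limit. The delicate point is that the two original curves of a degenerate Theta-network meet at a shared double point forming $120$ and $60$ degree angles, so to make room for $\gamma^3_n$ one genuinely has to split that double point into two nearby triple junctions and re-route the tangents; handling the geometry of this splitting carefully, while controlling the extra curvature it introduces on a small scale, is where the real work lies. I expect the rest — the liminf part and the reduction to the two relevant classes — to be routine consequences of weak $H^2$-compactness, the Rellich embedding into $C^1$, and the convexity/lower semicontinuity of $\int(\cdot)^2$.
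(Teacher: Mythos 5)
Your overall architecture (trivial case via compactness of the class, liminf inequality via lower semicontinuity of each term, recovery sequence for ``degenerate'' Theta--networks) is the same as the paper's, and the first two parts are fine: they correspond to Proposition~\ref{compattezza} and the semicontinuity proposition, which the paper proves before Theorem~\ref{rilassato}. The genuine gap is in the recovery construction, which is exactly the step the paper's proof is devoted to, and which you explicitly leave as ``where the real work lies'' while the estimates you sketch for it are wrong. If the inserted curve $\gamma^3_n$ of length $\ell_n$ has to turn by an angle $\theta$ bounded away from zero, then by Cauchy--Schwarz $\int_{\gamma^3_n}k^2\,{\rm d}s\geq \theta^2/\ell_n\to+\infty$, not $O(\ell_n)\to 0$ as you claim; likewise a ``boundary-layer modification'' of $\gamma^1,\gamma^2$ that re-routes their endpoint tangents by a fixed angle on a shrinking scale costs elastic energy of order $(\text{angle})^2/(\text{scale})\to\infty$, not $o(1)$. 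So the construction as you describe it cannot produce $F(\Gamma_n)\to\overline{F}(\Gamma)$.

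What you are missing is the observation that makes the construction cheap: because the two curves of a ``degenerate'' Theta--network already meet with angles in pairs of $120$ and $60$ degrees, the $120$-degree conditions at the two new triple junctions force the endpoint tangents of the inserted third curve to be parallel, so no turning at all is required --- a straight segment of length $1/n$ does the job with zero curvature, and no re-routing of the tangents of $\gamma^1,\gamma^2$ is needed either. The paper exploits this as follows: orient the network so that $\tau^1(0)$ makes $60$ degrees with the $x$-axis, use the intermediate value theorem to find interior points $Q^1\in\gamma^1$, $Q^2\in\gamma^2$ where the tangent is horizontal, cut the network at $P$, $Q^1$, $Q^2$, translate one half horizontally by $1/n$ and glue in three horizontal segments of length $1/n$. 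The glued network is an exact Theta--network, no curvature is added anywhere (the cuts at $Q^1,Q^2$ are $C^1$-compatible precisely because the tangents there are horizontal), the total length increases by $3/n$, hence $F(\Gamma_n)\to\overline F(\Gamma)$ and $\Gamma_n\rightharpoonup\Gamma$ in $H^2$. Without this idea (or a correct substitute, e.g.\ displacing the endpoints by $O(1/n)$ with a cutoff vanishing to first order at the junction, which costs only $O(n^{-2})$ energy), your proof of the inequality $\mathcal{F}_{rel}(\Gamma)\leq\overline{F}(\Gamma)$ for degenerate networks is not established.
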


Before proving Theorem \ref{rilassato} we discuss the compactness properties and the semicontinuity of the 
functional $\overline{F}$.
 
\subsubsection*{Compactness}

As for the Theta--networks, without loss of generality 
we can ask that 
one of the  two triple junctions of a $3$--network coincides with the origin.

\begin{prop}\label{compattezza}
Let $\Gamma_n=\{\gamma^1_n,\gamma^2_n,\gamma^3_n\}$
be a sequence of
$3$--networks of class $H^2$
such that
$$
\limsup_n \overline{F}(\Gamma_n)<+\infty\,.
$$
Then $\Gamma_n$ converge, up to a subsequence, 
weakly in $H^2$ and strongly
in $C^{1,\alpha}$ with $\alpha\in(0,\frac12 )$ to
$\Gamma$.
The network $\Gamma$ is either a Theta--network or a ``degenerate'' 
Theta--network.
\end{prop}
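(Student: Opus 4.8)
The plan is to extract a subsequence using standard compactness, identify the limit, and then rule out all "bad" limit configurations using the energy bound together with the quantitative Gauss–Bonnet estimate of Corollary~\ref{variantGB}. First I would observe that the uniform bound $\limsup_n \overline{F}(\Gamma_n) < +\infty$ gives, for each $i$, a uniform bound on $L(\gamma^i_n)$ and on $\int_0^1 (\gamma^i_{n,xx})^2\,\mathrm{d}x \cdot L^{-3}(\gamma^i_n)$. Since the curves are parametrized on $[0,1]$ with constant speed equal to their length and one triple junction is fixed at the origin, the sequence $\{\gamma^i_n\}$ is bounded in $H^2(0,1;\mathbb{R}^2)$ \emph{provided} $L(\gamma^i_n)$ stays bounded away from zero; passing to a subsequence, $\gamma^i_n \rightharpoonup \gamma^i$ weakly in $H^2$ and, by the compact embedding $H^2(0,1) \hookrightarrow C^{1,\alpha}([0,1])$ for $\alpha < 1/2$, strongly in $C^{1,\alpha}$. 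The $C^1$ convergence preserves the endpoint conditions (both triple junctions and the $120$-degree angle conditions pass to the limit), and it preserves the constant-speed parametrization, so the limiting speed $|\gamma^i_x|$ equals the limit of $L(\gamma^i_n)$.

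The crux is handling the curves whose length degenerates. Up to relabelling, suppose $L(\gamma^1_n), L(\gamma^2_n)$ are bounded below by a positive constant but $L(\gamma^3_n) \to 0$ (at least along a further subsequence, each $L(\gamma^i_n)$ either stays bounded below or tends to $0$). On $\gamma^1_n, \gamma^2_n$ the argument above applies verbatim, yielding $H^2$-weak and $C^{1,\alpha}$-strong limits $\gamma^1, \gamma^2$ which are regular (speed bounded below) and of class $H^2$; since $L(\gamma^3_n)\to 0$ the curve $\gamma^3$ shrinks to a point, so $\gamma^1$ and $\gamma^2$ now share \emph{both} endpoints and the six unit tangents at the two junctions split into two groups of three summing to zero, with the two tangents of $\gamma^1, \gamma^2$ plus the (limit of the) degenerating tangent; a short angle bookkeeping shows the surviving angles between $\gamma^1$ and $\gamma^2$ are $120$ and $60$ degrees, i.e.\ $\Gamma$ is a ``degenerate'' Theta–network, and $\overline{F}(\Gamma) = F(\gamma^1) + F(\gamma^2) < +\infty$. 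The remaining point is to exclude the possibility that \emph{two} lengths vanish, say $L(\gamma^2_n), L(\gamma^3_n)\to 0$: then consider the closed piecewise-$H^2$ curve $\Gamma^n_{2,3} = \{\gamma^2_n,\gamma^3_n\}$, which by Corollary~\ref{variantGB} satisfies $\int_{\Gamma^n_{2,3}}|k|\,\mathrm{d}s \ge \tfrac{4\pi}{3}$, so by Lemma~\ref{formula2c} one has $F(\Gamma^n_{2,3}) = E(\gamma^2_n)+E(\gamma^3_n) + L(\gamma^2_n)+L(\gamma^3_n) \ge \tfrac{8\pi}{3}$; since the lengths tend to zero, $E(\gamma^2_n)+E(\gamma^3_n) \to \tfrac{8\pi}{3}$ stays bounded, which is consistent — so this alone is not enough, and instead one argues that if two curves degenerate the $C^1$ limit forces $\gamma^1$ to be a closed curve through the origin with a tangent jump there (the two groups of three tangents degenerate to configurations that cannot be compatible with two distinct length-zero curves while keeping $\overline F$ finite), contradicting that $\overline F$ is $+\infty$ off Theta and degenerate-Theta networks; more robustly, since $\overline F(\Gamma_n)$ is finite each $\Gamma_n$ is genuinely a Theta-network and the combinatorial structure of three curves at two junctions, together with the $120^\circ$ conditions, is exactly what Corollary~\ref{variantGB} exploits to prevent more than one length collapsing.

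I expect the main obstacle to be precisely this last step — proving that \emph{at most one} curve can degenerate — since the naive length/energy bound is compatible with two curves shrinking. The fix is to use Corollary~\ref{variantGB} more carefully: applied to all three closed loops $\Gamma^n_{i,j}$ simultaneously (as in the proof of the lower bound $F(\Gamma)\ge 4\pi$), it forces $E(\gamma^i_n)$ to remain bounded below along any degenerating curve, and then a scaling/blow-up argument at the shrinking junction shows two simultaneously degenerating curves would force the rescaled picture to be a closed curve with nonzero total curvature and two $120^\circ$ vertices enclosing zero area, which is impossible. Once this is established, the identification of $\Gamma$ as a Theta– or degenerate-Theta–network is immediate from the $C^1$ convergence of the limit data, and weak $H^2$ convergence of the non-degenerate components follows from boundedness in $H^2$ as above.
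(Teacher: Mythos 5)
Your overall compactness scheme (uniform length bound from above, $H^2$ bounds for constant-speed parametrizations through the fixed junction at the origin, weak $H^2$ / strong $C^{1,\alpha}$ convergence, passage to the limit of the junction conditions) matches the paper, but there is a genuine gap exactly at the point you single out as the main obstacle: excluding that two lengths vanish simultaneously. Neither of your proposed fixes works. Appealing to the fact that $\overline F=+\infty$ off Theta-- and ``degenerate'' Theta--networks is circular: finiteness of $\overline F(\Gamma_n)$ constrains the limit only via lower semicontinuity, and the hard case of semicontinuity along this degeneration is precisely the divergence statement you are missing. The blow-up argument (``a closed curve with two $120^\circ$ vertices enclosing zero area is impossible'') is unsubstantiated, and a lower bound on $E(\gamma^i_n)$ by a fixed constant is in any case too weak. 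The paper's fix is a one-line sharpening of the estimate you already wrote: keep the Gauss--Bonnet bound $\int_{\Gamma^n_{2,3}}|k|\,\mathrm{d}s\ge\frac{4\pi}{3}$ from Corollary~\ref{variantGB}, but instead of adding the length term (which only gives the bounded, and indeed consistent, $8\pi/3$), apply H\"older against the vanishing length:
\begin{equation*}
\overline F(\Gamma_n)\;\ge\;\int_{\Gamma^n_{2,3}}k^2\,\mathrm{d}s\;\ge\;\frac{\Bigl(\int_{\Gamma^n_{2,3}}|k|\,\mathrm{d}s\Bigr)^{2}}{L(\gamma^2_n)+L(\gamma^3_n)}\;\ge\;\frac{16\pi^2}{9\,\bigl(L(\gamma^2_n)+L(\gamma^3_n)\bigr)}\;\longrightarrow\;+\infty\,,
\end{equation*}
contradicting the uniform energy bound. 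The Gauss--Bonnet estimate must be used quadratically (on the elastic energy alone), not linearly through Lemma~\ref{formula2c}.

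Two further points. First, finiteness of $\overline F(\Gamma_n)$ does not force $\Gamma_n$ to be a Theta--network, contrary to your last sentence: a subsequence may consist entirely of ``degenerate'' Theta--networks, and for that case one must separately exclude that one of the two remaining lengths vanishes; the paper does this with Corollary~\ref{variantGB1curve} and the same H\"older trick, giving $\overline F(\Gamma_n)\ge \pi^2/L(\gamma^i_n)\to+\infty$. Second, when exactly one length vanishes, your ``short angle bookkeeping'' is not automatic: the $120^\circ$ conditions at the two junctions involve the tangents $\tau^3_n(0)$ and $\tau^3_n(1)$ of the shrinking curve, which are a priori unrelated vectors, so the collapsed configuration need not exhibit angles in pairs of $120$ and $60$ degrees. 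One needs $|\tau^3_n(1)-\tau^3_n(0)|\to 0$, which the paper obtains from $|\tau^3_n(1)-\tau^3_n(0)|\le(\overline F(\Gamma_n))^{1/2}L(\gamma^3_n)^{1/2}$; only then does the $C^1$ convergence of $\gamma^1_n,\gamma^2_n$ identify the limit as a ``degenerate'' Theta--network.
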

\begin{proof}
Up to a subsequence (not relabeled) all $\Gamma_n$ are 
either Theta--networks or  ``degenerate'' 
Theta--networks. 
\begin{enumerate}
\item  Suppose they are
all Theta--networks, 
i.e. $\Gamma_n=\{\gamma_n^1,\gamma_n^2,\gamma_n^3\}$. For $x\in\{0,1\}$ we have
$\gamma^1_n(x)=\gamma^2_n(x)=\gamma^3_n(x)$ 
and $\sum_{i=1}^3\tau_n^i(x)=0$.

From the bound on the energy it follows that 
the length of each curve of the network $\Gamma_n$ is uniformly bounded from above.
As a consequence, for $i=1,2,3$
$$\int_0^1(\gamma^i_{n,xx})^2\,\mathrm{d}x
\leq \int_0^1 \frac{(\gamma^i_{n,xx})^2}{L(\gamma^i_n)^{3}}\,\mathrm{d}x L(\gamma^i_n)^{3}
\leq \left(\sum_{i=1}^3\int_0^1 \frac{(\gamma^i_{n,xx})^2}{L(\gamma^i_n)^{3}}\,\mathrm{d}x \right) L(\gamma^i_n)^{3} \leq F(\Gamma_n)L(\gamma^i_n)^{3} \, ,$$
from which one derives a uniform bound on the $L^2$--norm of the second derivatives. 
As each network contains the origin and the curves are parametrized with constant speed 
(equal to the length)
we have 
$\Vert \gamma_n^i\Vert_\infty\leq C$ and 
$\Vert{\gamma^{i}}_{n,x} \Vert_\infty\leq C'$
with $C,C'$ positive constants. Then, up to a subsequence (not relabeled), $\gamma^i_{n}\rightharpoonup \gamma^i_\infty$ 
weakly in $H^2(0,1)$ and 
$\gamma^i_{n}\rightarrow \gamma^i_\infty$
strongly in $C^{1,\alpha}([0,1])$ for every $\alpha\in(0,\frac12)$. It remains to understand what the limit is.

\smallskip

We claim that for at least two of the curves we have uniform bounds from below on the length. Indeed, suppose by contradiction that the length of at  least two curves of
 $\Gamma_n$ tends to zero. 
Let us say that $L(\gamma^2_n)\to 0$ and $L(\gamma^3_n)\to 0$.
Calling $\Gamma_{n,2,3}=\{\gamma_n^2,\gamma_n^3\}$,
by Corollary~\ref{variantGB} we have
\begin{equation}\label{eq:boundbelow}
\overline{F}(\Gamma_n)\geq \int_{\Gamma_{n,2,3}} k^2\,{\rm{d}}s
\geq\frac{16\pi^2}{9(L(\gamma_n^2)+L(\gamma_n^3))}\to +\infty
\end{equation}
a contradiction. 
Hence on at least two of the sequences $\gamma_n^i$, $i\in\{1,2,3\}$, the length is uniformly bounded from below. 

\begin{itemize}
\item[-] Suppose that none of the lengths of the curves in $\Gamma_n$ goes to zero. Then
due to the strong convergence in $C^1$ of
 $\gamma_n^i\to \gamma_{\infty}^i$ for all $i\in\{1,2,3\}$, 
all the tangents $\tau^i_\infty$
are well defined and also the conditions at the junctions $\sum_{i=1}^3\tau^i_\infty(x)=0$ and $\gamma^1_{\infty}(x)=\gamma^2_{\infty}(x)=\gamma^3_{\infty}(x)$, $x\in\{0,1\}$, are fulfilled. 
Hence $\Gamma$ is a Theta--network.
\item[-] Suppose now that the length of one curve of 
$\Gamma_n$ tends to zero. Let us fix $L(\gamma^3_n)\to0$.
We have
\begin{align*}
\vert \tau^3_n(1)-\tau^3_n(0)\vert
& =\frac{1}{L(\gamma^3_n)} \left| \int_0^1 \gamma^3_{n,xx}(x)\,\mathrm{d}x\, \right| \\
& \leq  \frac{1}{L(\gamma^3_n)} \left( \int_0^1 \left(  \gamma^3_{n,xx}(x)\right) ^2\,\mathrm{d}x\right)^{1/2}  \left( \int_0^1 1\,\mathrm{d}x\right)^{1/2}
 \\
 &
= \left( \int_0^1  \frac{(\gamma^3_{n,xx}(x))^2}
 {(L(\gamma^3_n))^3} \,\mathrm{d}x\right)^{1/2}(L(\gamma^3_n))^{1/2}\\
& \leq (\overline{F}(\Gamma_n))^{1/2} (L(\gamma^3_n))^{1/2}
\leq C (L(\gamma^3_n))^{1/2}\,.
\end{align*}
Hence $\vert \tau^3_n(1)-\tau^3_n(0)\vert\to 0$ as $n\to\infty$,
that combined with the $C^1$
convergence gives that  $\Gamma$ is  a ``degenerate'' 
Theta--network.
\end{itemize}
\item Suppose instead that all the terms of $\Gamma_n$ are ``degenerate'' 
Theta--networks, i.e. $\Gamma_n=\{\gamma_n^1,\gamma_n^2\}$. 
In this case, none of the length goes to zero. Indeed, suppose by contradiction that $L(\gamma^i_n)\to 0$ as $n\to\infty$, for some $i \in \{1,2\}$,
using Corollary~\ref{variantGB1curve} we know that
\begin{equation*}
\pi \leq \left(\int_{\gamma^i_n} k^2 \,{\rm{d}}s \right)^{\frac12} L(\gamma^i_n)^{\frac12},
\end{equation*}
and hence
\begin{equation*}
\overline{F}(\Gamma_n)\geq \int_{\gamma^i_n} k^2\,{\rm{d}}s
>\frac{\pi^2}{L(\gamma^i_n)}\to +\infty\,,
\end{equation*}
a contradiction. 
Due to the upper bound on the length, repeating the same arguments as before
we have again that $\gamma_n^i$ converges weakly in $H^2$ and strongly in $C^1$ to $\gamma_{\infty}^i$, for $i=1,2$. As no length goes to zero and there is convergence in $C^1$, the angles between the curves are preserved
passing to the limit. As a consequence, $\Gamma=\{\gamma_{\infty}^1,\gamma_{\infty}^2\}$ is a ``degenerate''  Theta--network.
\end{enumerate}
\end{proof}

\subsection*{Semicontinuity}

\begin{prop}
The  functional $\overline{F}$ on $3$-networks of class $H^2$ 
is weakly lower semicontinuous in $H^2$.
\end{prop}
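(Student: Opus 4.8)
The plan is to take an arbitrary sequence $\Gamma_n \rightharpoonup \Gamma$ weakly in $H^2$ and prove $\overline F(\Gamma) \le \liminf_n \overline F(\Gamma_n)$. We may assume the right-hand side is finite, otherwise there is nothing to prove, and after passing to a subsequence (not relabeled) we may assume $\overline F(\Gamma_n) \to \ell := \liminf_n \overline F(\Gamma_n) < +\infty$; in particular $\limsup_n \overline F(\Gamma_n) < +\infty$. Proposition~\ref{compattezza} then applies: along a further subsequence $\Gamma_n$ converges weakly in $H^2$ and strongly in $C^1$ to a network that is a Theta--network or a ``degenerate'' Theta--network, and by uniqueness of the weak limit this network is $\Gamma$. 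Hence $\Gamma$ is of one of the two admissible types and $\overline F(\Gamma) < +\infty$; moreover, if $\Gamma$ were of neither type, the same reasoning would show $\liminf_n \overline F(\Gamma_n) = +\infty = \overline F(\Gamma)$, closing that case.

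Passing to a further subsequence we may assume that either all the $\Gamma_n$ are Theta--networks or all are ``degenerate'' Theta--networks, and we write $\Gamma_n = \{\gamma^1_n,\gamma^2_n,\gamma^3_n\}$ (with $\gamma^3_n$ constant in the degenerate case) and $\Gamma = \{\gamma^1,\gamma^2,\gamma^3\}$ accordingly. From the strong $C^1$ convergence $\gamma^i_n \to \gamma^i$ provided by Proposition~\ref{compattezza} we get $L(\gamma^i_n) = \int_0^1 |\gamma^i_{n,x}|\,\mathrm dx \to L(\gamma^i)$ for every $i$; from the weak $H^2$ convergence we get $\gamma^i_{n,xx} \rightharpoonup \gamma^i_{xx}$ in $L^2(0,1)$, so that $\liminf_n \int_0^1 (\gamma^i_{n,xx})^2\,\mathrm dx \ge \int_0^1 (\gamma^i_{xx})^2\,\mathrm dx$ by weak lower semicontinuity of the $L^2$--norm. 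For each index $i$ with $L(\gamma^i) > 0$ (all three if $\Gamma$ is a Theta--network, only $i \in \{1,2\}$ if $\Gamma$ is degenerate, since there $\gamma^1,\gamma^2$ are regular and $\gamma^3$ has length zero) we combine these two facts with $L(\gamma^i_n)^3 \to L(\gamma^i)^3 > 0$ to conclude
\[
\liminf_n\left(\frac{1}{L(\gamma^i_n)^3}\int_0^1 (\gamma^i_{n,xx})^2\,\mathrm dx + L(\gamma^i_n)\right) \ \ge\ \frac{1}{L(\gamma^i)^3}\int_0^1 (\gamma^i_{xx})^2\,\mathrm dx + L(\gamma^i).
\]

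To finish I would sum these inequalities over the indices with $L(\gamma^i)>0$, using superadditivity of $\liminf$ together with the nonnegativity of each per-curve term $\frac{1}{L(\gamma^i_n)^3}\int_0^1 (\gamma^i_{n,xx})^2\,\mathrm dx + L(\gamma^i_n)$ in order to simply drop, in the degenerate case, the term corresponding to the vanishing curve $\gamma^3$. Comparing with Definition~\ref{funzionalerilassato} this yields $\liminf_n \overline F(\Gamma_n) \ge \overline F(\Gamma)$ in both cases. The one point requiring care is precisely this degenerate situation: there $L(\gamma^3_n)$ may tend to $0$ while $\int_0^1 (\gamma^3_{n,xx})^2\,\mathrm dx$ stays bounded away from $0$, so the quotient $L(\gamma^3_n)^{-3}\int_0^1 (\gamma^3_{n,xx})^2\,\mathrm dx$ need not converge; but since $\overline F(\Gamma)$ omits this curve and the quotient is nonnegative, discarding it is exactly what is allowed, and Proposition~\ref{compattezza} guarantees that at most one curve can degenerate and that the limit lies in the admissible class.
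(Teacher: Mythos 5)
Your proposal is correct and follows essentially the same route as the paper: reduce to a subsequence of finite energy consisting of Theta-- or ``degenerate'' Theta--networks, use continuity of the length (from the strong $C^1$ convergence) together with weak lower semicontinuity of the $L^2$--norm of the second derivative for each curve of positive limit length, and in the mixed case drop the nonnegative term of the vanishing curve, exactly as in the paper's inequality reducing the sum over three curves to the sum over two. The only difference is that you invoke Proposition~\ref{compattezza} explicitly to identify the type of the limit network, which the paper leaves implicit; this is a harmless (indeed clarifying) addition, not a change of method.
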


\begin{proof}
Let $\Gamma_n$ be a sequence of $3$-networks converging weakly in $H^2$ to $\Gamma_{\infty}$.
Passing to a subsequence (not relabeled) we can reduce to prove that
$\lim_{n}\overline{F}(\Gamma_{n})\geq \overline{F}(\Gamma_\infty)$. If $\lim_{n}\overline{F}(\Gamma_{n})=\infty$ there is nothing to prove,
and hence we can suppose that the limit of $\overline{F}(\Gamma_{n})$ is finite.
As a consequence, we can assume that all the terms of $\Gamma_n$ are either
Theta--networks or  ``degenerate'' 
Theta--networks.\smallskip

For a single regular $H^2$ curve $\gamma$
(of positive length) the functional 
$\frac{1}{L^3(\gamma)}\int_0^1\gamma^2_{xx}\,{\rm{d}}x+L(\gamma)$ 
is well defined and also weakly 
lower semicontinuous on this class. 
This follows from the continuity of the length  and and the 
lower semicontinuity of the $L^2$ norm.
In the case in which $\Gamma_n$ and the limit $\Gamma$ are all
Theta--networks 
(or all ``degenerate''  Theta--networks), 
the claim follows using
for all the three curves (or the two curves) of the network the previous argument for a single curve.
It remains to consider the case in which $\Gamma_n$ are Theta--networks 
and $\Gamma$ is a ``degenerate''  Theta--network. We may assume  that 
$L(\gamma^3)=0$ and the claim follows
also in this case with the same arguments
noticing that 
$$\sum_{i=1}^3 \frac{1}{L^3(\gamma^i_n)}
 \int_0^1((\gamma_{xx})_n^i)^2\,{\rm{d}}x+ L(\gamma^i_n)
\geq \sum_{i=1}^2 \frac{1}{L^3(\gamma^i_n)} \int_0^1 ((\gamma_{xx})_n^i)^2\,{\rm{d}}x+ L(\gamma^i_n) \, .$$
and then we can reduce to the case of ``degenerate''  Theta--networks.
\end{proof}

\subsection*{Existence}

Combining the compactness and the lower semicontinuity results we obtain the following:

\begin{cor}[Existence of minimizer for $\overline{F}$]\label{esistenzatrenet}
There exists $\Gamma_{\min}$ Theta--network or ``degenerate''  Theta--network  
minimizer of the functional $\overline{F}$ among $3$--networks of class $H^2$.
\end{cor}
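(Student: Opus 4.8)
The plan is to apply the direct method of the calculus of variations, with all the real work already outsourced to the two preceding propositions. First I would note that $\overline{F}$ is bounded below (trivially by $0$, and by $4\pi$ on Theta--networks via \eqref{lowerbound}) and that its infimum over $3$--networks of class $H^2$ is finite: since $\overline{F}$ agrees with $F$ on Theta--networks, the upper bound \eqref{bddenergy} gives
\[
m:=\inf\{\overline{F}(\Gamma):\Gamma\ \text{a $3$--network of class }H^2\}\ \leq\ \tfrac{2}{3}\sqrt{8\pi(8\pi+3\sqrt{3})}\ <\ +\infty,
\]
so $m\in[0,+\infty)$.

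Next I would pick a minimizing sequence $\Gamma_n$, i.e. $\overline{F}(\Gamma_n)\to m$. In particular $\limsup_n\overline{F}(\Gamma_n)<+\infty$, so Proposition~\ref{compattezza} applies: up to a subsequence (not relabeled) $\Gamma_n$ converges weakly in $H^2$ and strongly in $C^1$ to some $\Gamma_{\min}$, and $\Gamma_{\min}$ is either a Theta--network or a ``degenerate'' Theta--network. In either case $\Gamma_{\min}$ lies in the set where $\overline{F}$ is finite, hence is an admissible competitor.

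Then I would invoke the weak lower semicontinuity of $\overline{F}$ in $H^2$ (the Proposition immediately preceding this Corollary) along this subsequence to obtain
\[
\overline{F}(\Gamma_{\min})\ \leq\ \liminf_{n\to\infty}\overline{F}(\Gamma_n)\ =\ m.
\]
Since $\Gamma_{\min}$ is admissible we also have $\overline{F}(\Gamma_{\min})\geq m$, so $\overline{F}(\Gamma_{\min})=m$ and $\Gamma_{\min}$ is a minimizer of the stated form.

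The corollary is thus a routine assembly, and there is no genuine obstacle left at this stage: the one delicate phenomenon, namely that along a minimizing sequence the length of a single curve may degenerate to zero while the configuration still converges to an admissible (possibly ``degenerate'') Theta--network, is exactly what Proposition~\ref{compattezza} settles through the Gauss--Bonnet--type lower bounds of Corollary~\ref{variantGB} (and its one--curve analogue). If anything, the only point deserving a line of care is recording that $m$ is finite and nonnegative, which is why I would state it explicitly before running the argument.
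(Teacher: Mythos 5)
Your argument is exactly the paper's: the Corollary is stated as a direct consequence of combining Proposition~\ref{compattezza} (compactness, with the finite infimum guaranteed by the double bubble bound~\eqref{bddenergy}) with the weak lower semicontinuity of $\overline{F}$, which is precisely the direct-method assembly you carry out (the only implicit normalization, fixing one triple junction at the origin to rule out translations, is already noted in the paper before the compactness result). The proposal is correct and takes essentially the same route, just written out in more detail than the paper bothers to.
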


We can finally prove that the functional $\bar{F}$ is the relaxation of $F$ on Theta--networks in $H^2$.

\begin{proof}[{Proof of Theorem~\ref{rilassato}}]
The claim follows once we have proved that
for every ``degenerate''  Theta--network $\Gamma$  
there exists a sequence $\Gamma_n$ of Theta--networks such that 
$$
\Gamma_n\rightharpoonup\Gamma \mbox{ in }H^2 \mbox{ and }F(\Gamma_n)\to \overline{F}(\Gamma) \, . 
$$
Let us denote by $\gamma_1,\gamma_2$ the two curves 
constituting the ``degenerate''  Theta--network $\Gamma$. Without loss of generality we
put the four--point $P:=\gamma^1(0)=\gamma^1(1)=\gamma^2(0)=\gamma^2(1)$ at the origin
and orient the network in such a way that $\tau^1(0)$
forms an angle of $60$ degrees with the $x$--axis
(in particular the second component of the vector is positive).
As the angles at $P$ are fixed,  $\tau^1(1)$
forms an angle of $-60$ degrees with the $x$--axis
(in particular the second component of the vector is negative).
Then by the intermediate value Theorem there exists at least one point  $t_1\in[0,1]$ 
such that $\tau^1(t_1)$ 
is horizontal (the second component of
the vector is zero). 
Repeating the same argument for the curve $\gamma^2$ one finds also a $t_2\in[0,1]$ 
such that $\tau^2(t_2)$ 
is horizontal. Call $Q^1=\gamma^1(t_1)$ and $Q^2=\gamma^2(t_2)$.
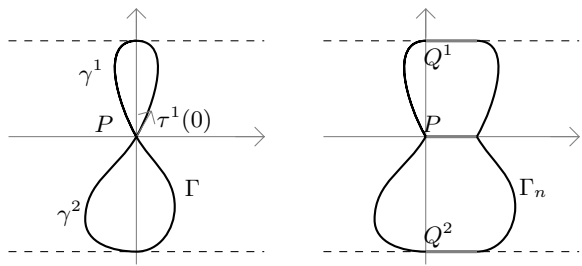
\begin{figure}[H]
\begin{center}
\begin{tikzpicture}[scale=1.7]
\draw[black!50!white, shift={(1,0)}, scale=0.75, rotate=-90]
(0,0)to[out= -45,in=135, looseness=1] (0.1,-0.1)
(0,0)to[out= -135,in=45, looseness=1] (-0.1,-0.1)
(0,-0.3)to[out= 90,in=-90, looseness=1] (0,0.003);
\draw[black!50!white, shift={(0,1)}, scale=0.75, rotate=0]
(0,0)to[out= -45,in=135, looseness=1] (0.1,-0.1)
(0,0)to[out= -135,in=45, looseness=1] (-0.1,-0.1)
(0,-0.3)to[out= 90,in=-90, looseness=1] (0,0.003);
\draw[black!50!white]
(-1,0)--(1,0)
(0,-1)--(0,1);
\draw[dashed]
(-1,0.75)--(1,0.75)
(-1,-0.9)--(1,-0.9);
  \draw[thick]
(0,0)to[out= 60,in=0, looseness=1] (0.2,0.75)
(0.2,0.75)to[out= 180,in=120, looseness=1] (0,0)
(0,0)to[out= -60,in=90, looseness=1] (0.3,-0.55)
(0.3,-0.55)to[out= -90,in=0, looseness=1] (-0.15,-0.9)
(0,0)to[out= -120,in=90, looseness=1] (-0.4,-0.65)
(-0.4,-0.65)to[out= -90,in=180, looseness=1] (-0.15,-0.9);
\draw[white]
(0,-1.1)--(1,-1.1);
\draw[black!50!white, shift={(0.117,0.2)}, scale=0.75, rotate=-30]
(0,0)to[out= -45,in=135, looseness=1] (0.1,-0.1)
(0,0)to[out= -135,in=45, looseness=1] (-0.1,-0.1)
(0,-0.3)to[out= 90,in=-90, looseness=1] (0,0.003);
\path[font=\footnotesize, shift={(0,0)}] 
 (0.3,-0.4)[right] node{$\Gamma$}
 (-0.1,0.1)[left] node{$P$}
 (0.08,0.13)[right] node{$\tau^1(0)$}
 (-0.53,0.5)[right] node{$\gamma^1$}
 (-0.7,-0.6)[right] node{$\gamma^2$};
\end{tikzpicture}\qquad
\begin{tikzpicture}[scale=1.7]
\draw[black!50!white, shift={(1,0)}, scale=0.75, rotate=-90]
(0,0)to[out= -45,in=135, looseness=1] (0.1,-0.1)
(0,0)to[out= -135,in=45, looseness=1] (-0.1,-0.1)
(0,-0.3)to[out= 90,in=-90, looseness=1] (0,0.003);
\draw[black!50!white, shift={(-0.2,1)}, scale=0.75, rotate=0]
(0,0)to[out= -45,in=135, looseness=1] (0.1,-0.1)
(0,0)to[out= -135,in=45, looseness=1] (-0.1,-0.1)
(0,-0.3)to[out= 90,in=-90, looseness=1] (0,0.003);
\draw[black!50!white]
(-1,0)--(1,0)
(-0.2,-1)--(-0.2,1);
\draw[dashed]
(-1,0.75)--(1,0.75)
(-1,-0.9)--(1,-0.9);
\draw[gray, very thick]
(-0.2,0)--(0.2,0)
(0,0.75)--(0.4,0.75)
(-0.35,-0.9)--(0.05,-0.9);
  \draw[thick]
(0.2,0)to[out= 60,in=0, looseness=1] (0.4,0.75)
(0,0.75)to[out= 180,in=120, looseness=1] (-0.2,0)
(0.2,0)to[out= -60,in=90, looseness=1] (0.5,-0.55)
(0.5,-0.55)to[out= -90,in=0, looseness=1] (0.05,-0.9)
(-0.2,0)to[out= -120,in=90, looseness=1] (-0.6,-0.65)
(-0.6,-0.65)to[out= -90,in=180, looseness=1] (-0.35,-0.9);
\draw[white]
(0,-1.1)--(1,-1.1);
\path[font=\footnotesize, shift={(0,0)}] 
 (0.45,-0.4)[right] node{$\Gamma_n$}
 (-0.1,0.8)[below] node{$Q^1$}
 (-0.1,-0.95)[above] node{$Q^2$}
 (0,0.1)[left] node{$P$};
\end{tikzpicture}
\end{center}
\caption{Construction of the recovery sequence.}\label{figrecovery}
\end{figure}

We construct the (recovery) sequence of Theta--networks $\Gamma_n$ cutting $\Gamma$ at the 
points $P,Q^1$ and $Q^2$ and gluing three horizontal segments of length $1/n$ (see Figure \ref{figrecovery}). 
Then $\Gamma_n$ is indeed a Theta--network for each $n$ and 
it is easy to see that $\Gamma_n\rightharpoonup \Gamma$ weakly in $H^2$.  Moreover 
$F(\Gamma_n)\to \overline{F}(\Gamma)$ as the 
pieces of $F(\Gamma_n)$ and $\overline{F}(\Gamma)$ with non zero
curvature are the same
and the total length of $\Gamma_n$ converge to the length of $\Gamma$ as $n\to\infty$.
\end{proof}

\section{Properties of minimizers}\label{prop}

In this section we want to prove some properties of the minimizers for the relaxed functional
$\overline{F}$.
In particular we show that the minimizers are actually Theta--networks 
and hence we give a positive
answer to
Problem~\ref{Pdoppiabolla}.
We get our result
showing that the minimal energy of a 
``degenerate" Theta--network is higher than the energy
of a given competitor among Theta--networks (the standard double bubble).
A tool to obtain the desired lower bound is studying the
minimization problem  for drops and double drops.

\medskip

Before studying these other minimization problems we shortly discuss the Euler-Lagrange equations and the regularity of 
the curves constituting a $\overline{F}$-minimizing network.

\begin{prop}
Let $\widetilde{\Gamma}$ be a minimizer for $\overline{F}$ and $\widetilde{\gamma}$ be 
any of the regular curves constituting $\widetilde{\Gamma}$. Then
\begin{enumerate}
\item the curve 
$\widetilde{\gamma}$ minimizes the penalized elastic energy $\int_\gamma k^2+1\,{\rm{d}}s$ among regular $H^2$ curves $\gamma:[0,1]\to \mathbb{R}^2$ satisfying 
the same boundary conditions as $\widetilde{\gamma}$, i.e. $\gamma(x)=\widetilde{\gamma}(x)$ and $\partial_s \gamma(x)= \partial_s \widetilde{\gamma}(x)$ at $x\in \{0,1\}$.
\item  the curve 
$\widetilde{\gamma}$ is $C^{\infty}([0,1])$ and solves the equation
\begin{equation}\label{EL:curve}
2 \partial_s^2 k +k^3-k =0 \mbox{ on }(0,1)\, . 
\end{equation}
\end{enumerate} 
\end{prop}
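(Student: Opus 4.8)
The statement has two parts. Part (1), that $\widetilde\gamma$ minimizes the penalized elastic energy among all regular $H^2$ competitors with the same endpoints and endpoint tangents, is almost immediate from the structure of $\overline F$. First I would observe that if $\gamma$ is any regular $H^2$ curve with $\gamma(x)=\widetilde\gamma(x)$ and $\partial_s\gamma(x)=\partial_s\widetilde\gamma(x)$ for $x\in\{0,1\}$, then replacing $\widetilde\gamma$ by $\gamma$ inside the network $\widetilde\Gamma$ produces a new $3$--network $\Gamma'$ which is still a Theta--network (or still a ``degenerate'' Theta--network): the junction points are unchanged because the endpoints coincide, and the $120$ degree angle conditions at $P^1,P^2$ are preserved because the endpoint tangents coincide, so $\Gamma'$ is admissible for $\overline F$. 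Since $\overline F$ is a sum of the penalized elastic energies $\int_\gamma k^2+1\,\mathrm ds$ over the curves of the network, and only the term corresponding to $\widetilde\gamma$ changes, minimality of $\widetilde\Gamma$ gives $\int_{\widetilde\gamma}k^2+1\,\mathrm ds\le \int_\gamma k^2+1\,\mathrm ds$. One mild point to address: the competitor $\gamma$ should be parametrized on $[0,1]$ with constant speed equal to its length, which is harmless since the geometric energy is reparametrization invariant, so I would just note that $\int_\gamma k^2+1\,\mathrm ds$ is well-defined independently of parametrization.

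For part (2), I would derive the Euler--Lagrange equation by taking normal variations and then bootstrap to get $C^\infty$ regularity. Fix $\widetilde\gamma$ and write the energy functional $\mathcal E(\gamma)=\int_\gamma k^2+1\,\mathrm ds$; consider variations $\gamma_\varepsilon=\widetilde\gamma+\varepsilon\,\varphi\,\nu$ with $\varphi\in C^\infty_c(0,1)$ (compact support to respect the fixed boundary data to first order — one has to be slightly careful that fixing endpoint position \emph{and} tangent means the admissible first variations must vanish together with their first derivative at $0,1$, which $C^\infty_c(0,1)$ functions do). Computing $\frac{d}{d\varepsilon}\big|_{\varepsilon=0}\mathcal E(\gamma_\varepsilon)=0$ using the standard first-variation formulas for $\int k^2\,\mathrm ds$ and for length yields, after integration by parts, the weak form of $2\partial_s^2 k+k^3-k=0$. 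Since a priori $\widetilde\gamma$ is only $H^2$, so $k\in L^2$, this identity should first be read distributionally; then a bootstrap argument (the equation expresses $\partial_s^2 k$ in terms of $k$ and $k^3$, which lie in $L^1$ initially, improving the regularity of $k$ step by step, hence of $\gamma$) upgrades $\widetilde\gamma$ to $C^\infty([0,1])$, including up to the boundary since the equation is satisfied on the open interval and the boundary data are smooth (constant) constraints. Here I would simply cite that this is the classical elastica equation and the bootstrap is routine; references such as~\cite{langsing1} and~\cite{daplu} already contain the computation of the first variation.

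The main obstacle is the bookkeeping in the first variation — correctly handling the constraint that \emph{both} the endpoints and the endpoint unit tangents are fixed. Because the tangent is fixed, one cannot use arbitrary $H^2$ variations vanishing only at the endpoints; the normal component of the variation and its $s$-derivative must both vanish at $0$ and $1$, which is exactly why $C^\infty_c(0,1)$ (or more generally $\varphi\in H^2_0(0,1)$) is the right test class and why no boundary terms survive the integration by parts, leaving the clean interior equation~\eqref{EL:curve} with no natural boundary conditions. Once the weak equation is in hand, promoting it to classical smoothness is standard elliptic bootstrapping and I would not belabor it. I would also remark that the same argument applies verbatim to each of the (two or three) regular curves of $\widetilde\Gamma$, so every curve of an $\overline F$-minimizer is a smooth elastica in the sense of~\eqref{EL:curve}.
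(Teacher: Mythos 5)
Your proposal follows essentially the same route as the paper: part (1) by the identical replacement argument (the modified network remains an admissible Theta-- or ``degenerate'' Theta--network because endpoints and endpoint tangents are unchanged, and only one term of $\overline F$ is affected), and part (2) by passing to the weak first variation with test functions vanishing to first order at the endpoints and then bootstrapping, with the detailed computation delegated to \cite{langsing} and \cite[Prop.~4.1]{daplu}, exactly as the paper does. One small caveat: since $\widetilde\gamma$ is a priori only $H^2$, its unit normal $\nu$ is only $H^1$, so the purely normal perturbation $\widetilde\gamma+\varepsilon\varphi\nu$ need not be an admissible $H^2$ competitor; the standard fix, consistent with the paper's use of $H^2_0(0,1)$ test functions, is to take full vector-valued variations $\psi\in C^\infty_c\bigl((0,1);\mathbb{R}^2\bigr)$, which changes nothing in the rest of your argument.
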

 \begin{proof}
\begin{enumerate}
\item Let $\Gamma$ be the regular network that we obtain from the network $\widetilde{\Gamma}$ replacing the curve $\widetilde{\gamma}$ with a regular $H^2$ curve $\gamma$ that satisfies the 
same boundary conditions as $\widetilde{\gamma}$, as explained in the statement. Then $\Gamma$ is a Theta--network if $\widetilde{\Gamma}$ is a Theta--network, whereas $\Gamma$ is a ``degenerate" Theta--network if so is $\widetilde{\Gamma}$. 
Since by assumption $\overline{F}(\widetilde{\Gamma}) \leq \overline{F}(\Gamma)$ and we did not change the other curves (or the other curve) constituting the network, it follows that $\widetilde{\gamma}$ minimizes the penalized elastic energy among curves satisfying the same boundary conditions.
\item For smooth curves the Euler-Lagrange equation (\ref{EL:curve}) is computed in \cite{langsing}. In this weak setting, as usual one starts from the first variation that corresponds to the weak formulation of (\ref{EL:curve}). Then by appropriate choice of the test functions in $H^2_0(0,1)$ one show first that $k \in L^{\infty}$, then $k \in W^{1,\infty}$ and finally that $\tilde{\gamma} \in H^4$. By elliptic regularity and a bootstrap argument one gets that $\widetilde{\gamma}$ is in $C^{\infty}([0,1])$ and in particular that (\ref{EL:curve}) is satisfied pointwise. These reasonings have been carried out in details for instance in \cite[Prop.4.1]{daplu}.
\end{enumerate}
\end{proof}

\subsection{Drops}\label{gocce}

\begin{defn}\label{def:drop}
We call \emph{drop} a regular curve $\gamma:[0,1]\to\mathbb{R}^2$ of class $H^2$ such that $\gamma(0)=\gamma(1)$.
\end{defn}
Notice that in the definition of drops
we do not impose any assumption on the tangents
at $x=0$ and at $x=1$.
In the following every time we consider a drop, without loss of generality,  
the point $\gamma(0)=\gamma(1)$ is fixed at the origin of the axis.

\begin{figure}[H]
\begin{center}
\begin{tikzpicture}[scale=2]
 \draw[black!60!white, shift={(2,-1)}, scale=0.75, rotate=-90]
(0,0)to[out= -45,in=135, looseness=1] (0.1,-0.1)
(0,0)to[out= -135,in=45, looseness=1] (-0.1,-0.1)
(0,-0.3)to[out= 90,in=-90, looseness=1] (0,0.003);
\draw[black!60!white, shift={(0,0.5)}, scale=0.75, rotate=0]
(0,0)to[out= -45,in=135, looseness=1] (0.1,-0.1)
(0,0)to[out= -135,in=45, looseness=1] (-0.1,-0.1)
(0,-0.3)to[out= 90,in=-90, looseness=1] (0,0.003);
\draw[black!30!white]
(0,-1.25)--(0,0.5)
(-2,-1)--(0,-1)
(2,-1)--(0,-1);
\draw[thick, shift={(0,-1)}]
(0,0)to[out= 20,in=-90, looseness=1] (0.7,0.55)
(0,0)to[out= 160,in=-90, looseness=1] (-0.9,0.65)
(-0.9,0.65)to[out=90,in=-90, looseness=1] (-1.3,0.9)
(-1.3,0.9)to[out=90,in=-160, looseness=1](-0.8,1.2)
(-0.8,1.2)to[out=20,in=180, looseness=1] (-0.2,0.9)
(0.7,0.55)to[out= 90,in=180, looseness=1] (1,0.9)
(1,0.9)to[out= 0,in=0, looseness=1] (0.7,0.55)
(0.7,0.55)to[out= 180,in=-30, looseness=1] (0.3,0.75)
(0.3,0.75)to[out=150,in=0, looseness=1] (-0.2,0.9);
\path[shift={(0,0)}] 
 (0.13,0)[right] node{$\gamma$};
\end{tikzpicture}

\end{center}    
              \caption{An example of a drop.}\label{drop}
          \end{figure}
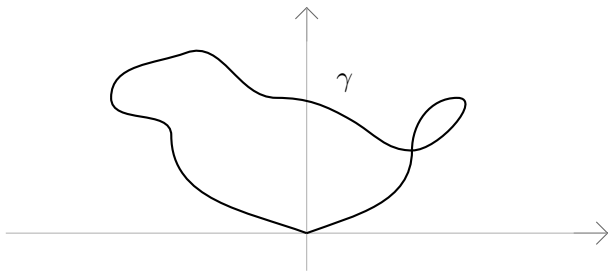

We consider now the restriction of our minimization problem to the class of drops.  
Let us recall that $F$ denotes also 
the penalized elastic energy functional on regular $H^2$ curves  
$\gamma:[0,1]\to\mathbb{R}^2$, that is 
$F(\gamma)=\int_\gamma k^2+1\,{\rm{d}}s$.

\begin{prob}\label{problemdrops}
Is $$\inf\{F(\gamma)\vert \;\gamma\, \text{ is a drop}\} \mbox{ attained?} $$
\end{prob}

We observe that Problem~\ref{problemdrops} is slightly different from Problem~\ref{Pdoppiabolla}, 
since no condition is imposed on the angle  at $\gamma(0)=\gamma(1)$.
Since the circle of radius $1$ is a particular drop, we immediately get
\begin{equation}\label{limitato}
m:=\inf\{F(\gamma)\vert\; \gamma\, \text{is a drop}\,\}\leq 4\pi<\infty\,.
\end{equation}

\begin{thm}[Existence of minimizers]\label{esistenzaminimogoccia}
There exists a drop $\gamma:[0,1]\to\mathbb{R}^2$ such that $F(\gamma)=m$. 
That is, a minimizer for Problem \ref{problemdrops} exists.
\end{thm}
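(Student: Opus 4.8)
The plan is to use the direct method of the calculus of variations. Let $\{\gamma_n\}$ be a minimizing sequence of drops, so that $F(\gamma_n)\to m$. By the reduction to constant-speed parametrization we may assume each $\gamma_n:[0,1]\to\mathbb R^2$ is parametrized proportionally to arclength with $\gamma_n(0)=\gamma_n(1)=0$, and then $F(\gamma_n)=\frac{1}{L(\gamma_n)^3}\int_0^1|\gamma_{n,xx}|^2\,{\rm d}x+L(\gamma_n)$. The first step is to extract uniform bounds. Since $F(\gamma_n)$ is bounded, the length term $L(\gamma_n)\le C$ is bounded above, and consequently $\int_0^1|\gamma_{n,xx}|^2\,{\rm d}x\le F(\gamma_n)\,L(\gamma_n)^3\le C$. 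Combined with $|\gamma_{n,x}|\equiv L(\gamma_n)\le C$ and $\gamma_n(0)=0$ we get a uniform $H^2(0,1)$ bound, hence (up to a not relabeled subsequence) $\gamma_n\rightharpoonup\gamma_\infty$ weakly in $H^2$ and $\gamma_n\to\gamma_\infty$ strongly in $C^{1,\alpha}$ for $\alpha\in(0,\tfrac12)$.

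The second step is to check that the limit $\gamma_\infty$ is an admissible drop, that is, a \emph{regular} $H^2$ curve with $\gamma_\infty(0)=\gamma_\infty(1)$. The closure condition passes to the limit by $C^0$ convergence. The subtle point is regularity: we must rule out $L(\gamma_n)\to 0$, since the constant speed is $|\gamma_{n,x}|\equiv L(\gamma_n)$, and a vanishing length would make the limit a constant (degenerate) curve. This is where the lower bounds of Section~\ref{basic} enter. Since a drop is a closed curve, its total curvature satisfies $\int_{\gamma_n}|k|\,{\rm d}s\ge 2\pi$ by the Gauss--Bonnet type estimate of Appendix~A (Corollary~\ref{variantGB1curve}), whence by Lemma~\ref{formula2c} applied with $c=2\pi$, or directly by H\"older, $\pi\le\big(\int_{\gamma_n}k^2\,{\rm d}s\big)^{1/2}L(\gamma_n)^{1/2}$, so $\int_{\gamma_n}k^2\,{\rm d}s\ge \pi^2/L(\gamma_n)$. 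If $L(\gamma_n)\to0$ this forces $F(\gamma_n)\to+\infty$, contradicting that $\{\gamma_n\}$ is minimizing. Hence $L(\gamma_n)\ge\delta>0$ along the sequence; by $C^1$ convergence $|\gamma_{\infty,x}|\equiv\lim L(\gamma_n)=L(\gamma_\infty)\ge\delta$, so $\gamma_\infty$ is regular and is a genuine drop.

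The third step is lower semicontinuity. Writing $F(\gamma)=\frac{1}{L^3(\gamma)}\int_0^1|\gamma_{xx}|^2\,{\rm d}x+L(\gamma)$ on constant-speed curves, the length $L(\gamma_n)=|\gamma_{n,x}(0)|\to L(\gamma_\infty)>0$ by $C^1$ convergence (so the prefactor $L^{-3}$ converges to $L(\gamma_\infty)^{-3}>0$), while $\int_0^1|\gamma_{\infty,xx}|^2\,{\rm d}x\le\liminf_n\int_0^1|\gamma_{n,xx}|^2\,{\rm d}x$ by weak $H^2$ lower semicontinuity of the $L^2$ norm. Therefore
\begin{equation*}
F(\gamma_\infty)=\frac{1}{L^3(\gamma_\infty)}\int_0^1|\gamma_{\infty,xx}|^2\,{\rm d}x+L(\gamma_\infty)\le\liminf_{n\to\infty}F(\gamma_n)=m,
\end{equation*}
and since $\gamma_\infty$ is admissible, $F(\gamma_\infty)\ge m$, so $F(\gamma_\infty)=m$. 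This proves the theorem. The only real obstacle is the second step — excluding length degeneration — and it is precisely handled by the Gauss--Bonnet/H\"older lower bound exactly as in the proof of Proposition~\ref{compattezza}; everything else is the standard direct method and parallels the compactness and semicontinuity arguments already established for $\overline F$.
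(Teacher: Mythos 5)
Your proof is correct and follows essentially the same route as the paper: the direct method with constant-speed parametrization, the upper length bound from $L\le F$, exclusion of vanishing length via Corollary~\ref{variantGB1curve} plus H\"older (giving $L(\gamma_n)\ge \pi^2/C$), weak $H^2$ compactness with strong $C^{1,\alpha}$ convergence, and lower semicontinuity of $F$ on constant-speed curves. One small slip: a drop may have an angle at $\gamma(0)=\gamma(1)$, so Corollary~\ref{variantGB1curve} yields total curvature at least $\pi$, not $2\pi$ — but this is harmless, since the inequality you actually use, $\pi\le\bigl(\int_{\gamma_n}k^2\,{\rm d}s\bigr)^{1/2}L(\gamma_n)^{1/2}$, is exactly the correct one and coincides with the paper's argument.
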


\begin{proof}
We consider a minimizing sequence of drops $\gamma_n:[0,1]\to\mathbb{R}^2$. Since $L(\gamma)\leq F(\gamma)$, the length of the curves $L(\gamma_n)$ is uniformly bounded from above.
Moreover 
combining Corollary~\ref{variantGB1curve}
and H\"{o}lder inequality we obtain
\begin{equation*}
\pi \leq  \int_{\gamma_n} \vert k_n \vert \,{\rm{d}}s_n\leq
\left( \int_{\gamma_n} k_n^2\,{\rm{d}}s_n\right)^{1/2} (L(\gamma_n))^{1/2}\leq 
(F(\gamma_n))^{1/2} (L(\gamma_n))^{1/2}\,.
\end{equation*}
As $F(\gamma_n)\leq C<\infty$, for all $n \in \mathbb{N}$ we get the uniform bound
\begin{equation}\label{lunghezza}
0<\frac{\pi^2}{C}\leq L(\gamma_n)\leq C\,. 
\end{equation}

By parametrizing 
the sequence with constant velocity equal to the length we obtain
$\Vert \gamma_{n,x}\Vert_\infty=L(\gamma_n)\leq C$.
As we have fixed the point $\gamma_n(0)=\gamma_n(1)$ at the origin, 
we also get $\Vert \gamma_n\Vert_\infty\leq C$.
By~\eqref{velcost} we obtain 
$ \frac{1}{L^3(\gamma_n)}
\int_0^1 \gamma^2_{n,xx}\,{\rm{d}}x\leq F(\gamma_n)\leq C$, thus 
we have
$$
\Vert \gamma_{n,xx}\Vert _2\leq L^3(\gamma_n) C\leq C^4\,.
$$
Then there exist a subsequence $\gamma_{n_{k}}$ and $\gamma\in H^2$ such that
$\gamma_{n_{k}}$ converge weakly in $H^2$ and strongly in 
$C^{1,\alpha}$ (with $\alpha \in (0,\frac12)$) to $\gamma$. 
Because of \eqref{lunghezza} the limit has 
strictly positive length.
The functional $F$ is weakly lower semicontinuous, hence
$F(\gamma)\leq \lim_k F(\gamma_{n_k})=m$, and this concludes the proof.
\end{proof}

In Subsection~\ref{minimum} we give explicitly the unique 
(up to isometries of $\mathbb{R}^2$) minimizer
for Problem~\ref{problemdrops}.

\subsection{Double drops}
\begin{defn}\label{dd}
We call 
\begin{enumerate}
\item \emph{double drop}
a $2$--network $\mathcal{D}=\{\gamma^1,\gamma^2\}$,
such that for $i\in\{1,2\}$ the curves $\gamma^i:[0,1]\to\mathbb{R}^2$  are drops and there exists
a four--point $\gamma^1(0)=\gamma^1(1)=\gamma^2(0)=\gamma^2(1)=:P$;
\item \emph{symmetric double drop} $\mathcal{SD}$
a double drop in which the 
two drops are symmetric with respect to 
the four--point $P$.
\end{enumerate}
\end{defn}

Notice that a symmetric double drop can be seen as a double drop $\mathcal{D}=\{\gamma_1,\gamma_2\}$ where $\gamma_2(t)=-\gamma_1(1-t)$ for all $t \in [0,1]$. 
The penalized elastic energy for a network of two curves is simply 
the sum of the penalized elastic energy of
the curves and for a symmetric double drop $\mathcal{SD}=\{\gamma^1,\gamma^2\}$ 
we have that $F(\mathcal{SD})=2F(\gamma^1)=2F(\gamma^2)$.

Again, as for the drops, for simplicity we fix the four--point $P$ at the origin of the axes.
From the definition of symmetric double drop we have 
that the curves $\gamma^1$ and $\gamma^2$ form at $P$ angles equal in pairs.

\begin{rem}\label{globallyreg}
Due to symmetry any symmetric double drop is globally of class $H^2$.
More precisely: given 
a symmetric double drop  $\mathcal{SD}:[0,1]\to\mathbb{R}^2$, 
 there exists a reparametrization of  $\mathcal{SD}$ such that 
 $\mathcal{SD}$ is a closed curve
globally of class $H^2$.
\end{rem}

\subsection{Characterization of optimal drops}\label{minimum}
\begin{lem}\label{allmin}
Let us call
\begin{align*}
n&:=\inf\{F(\mathcal{D})\vert\; \mathcal{D}\, \text{is a  double drop}\,\}\\
\tilde{n}&:=\inf\{F(\mathcal{SD})\vert\; \mathcal{SD}\, \text{is a symmetric double drop}\,\}\\
\overline{n}&:=\inf\{F(\varphi)\vert \; \varphi:\mathbb{S}^1\to\mathbb{R}^2,\, \varphi\, 
\text{is a regular curve of class }\,H^2\,  \nonumber \\ 
& \qquad \quad  \text{and there exists at least two points}\, 
s_1\neq s_2\in \mathbb{S}^1\,\text{such that}\;\gamma(s_1)=\gamma(s_2)\}\,.
\end{align*}
Then 
$$
n=\tilde{n}=\overline{n}=2m
$$
with $m$ as defined in~\eqref{limitato}.
Moreover all the infima are minima.
\end{lem}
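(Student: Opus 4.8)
The plan is to establish the chain $2m\le n\le\tilde n\le 2m$, then to sandwich $\overline n$ between $2m$ and $\tilde n$, and finally to deduce all the attainment statements from Theorem~\ref{esistenzaminimogoccia}.

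\emph{Step 1: $n=\tilde n=2m$.} The bound $n\le\tilde n$ is immediate, since every symmetric double drop is in particular a double drop. For $\tilde n\le 2m$, take any drop $\gamma\colon[0,1]\to\mathbb R^2$ with $\gamma(0)=\gamma(1)=0$ and set $\gamma^2(t):=-\gamma(1-t)$; then $\mathcal{SD}:=\{\gamma,\gamma^2\}$ is a symmetric double drop, and since $\gamma^2$ is obtained from $\gamma$ by an isometry of $\mathbb R^2$ composed with an orientation reversal, $F(\mathcal{SD})=2F(\gamma)$; taking the infimum over drops yields $\tilde n\le 2m$. For $2m\le n$, given a double drop $\mathcal D=\{\gamma^1,\gamma^2\}$ each $\gamma^i$ is a drop, so $F(\mathcal D)=F(\gamma^1)+F(\gamma^2)\ge 2m$; taking the infimum gives $n\ge 2m$. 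Hence $2m\le n\le\tilde n\le 2m$.

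\emph{Step 2: $\overline n=2m$.} For $\overline n\le\tilde n$, note that by Remark~\ref{globallyreg} a symmetric double drop $\mathcal{SD}$ admits a reparametrization as a closed regular curve $\varphi\colon\mathbb S^1\to\mathbb R^2$ of class $H^2$; this $\varphi$ visits the four--point $P$ at two distinct parameters, so it belongs to the class defining $\overline n$ and $F(\varphi)=F(\mathcal{SD})$, giving $\overline n\le\tilde n$. The substantive point is $\overline n\ge 2m$: let $\varphi\colon\mathbb S^1\to\mathbb R^2$ be a regular $H^2$ curve with $\varphi(s_1)=\varphi(s_2)$ for some $s_1\neq s_2$, and cut $\mathbb S^1$ at these points into two arcs $I_1,I_2$ with $\partial I_1=\partial I_2=\{s_1,s_2\}$. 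Reparametrizing the restrictions $\varphi|_{I_j}$ on $[0,1]$ gives curves $\delta^1,\delta^2$ which are regular and of class $H^2$, have positive length (as $s_1\neq s_2$), and satisfy $\delta^j(0)=\varphi(s_1)=\varphi(s_2)=\delta^j(1)$, hence are drops. Since the length and the curvature of $\varphi$ are local quantities, $F(\varphi)=F(\delta^1)+F(\delta^2)\ge 2m$; passing to the infimum gives $\overline n\ge 2m$. Together with $\overline n\le\tilde n=2m$ this proves $n=\tilde n=\overline n=2m$.

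\emph{Step 3: attainment.} By Theorem~\ref{esistenzaminimogoccia} there is a drop $\gamma_0$, which we may assume based at the origin, with $F(\gamma_0)=m$. Then $\mathcal{SD}_0:=\{\gamma_0,\gamma_0^2\}$ with $\gamma_0^2(t):=-\gamma_0(1-t)$ is a symmetric (hence also ordinary) double drop with $F(\mathcal{SD}_0)=2m=\tilde n=n$, so $\tilde n$ and $n$ are attained; reparametrizing $\mathcal{SD}_0$ as in Remark~\ref{globallyreg} produces a closed regular $H^2$ curve with a self--intersection and energy $2m=\overline n$, so $\overline n$ is attained as well. Beyond Theorem~\ref{esistenzaminimogoccia}, the only step with real content is the decomposition used in Step~2, and even there the main observation is the elementary one that a self--intersection point of a closed curve is automatically a common endpoint of each of the two complementary arcs, so that cutting there splits the curve into two drops with additive energy.
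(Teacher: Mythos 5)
Your proposal is correct and follows essentially the same route as the paper: the same sandwich $2m\le n\le \overline n\le \tilde n\le 2m$ (with the upper bound coming from the symmetric double drop built from a minimizing drop of Theorem~\ref{esistenzaminimogoccia}, the lower bound from splitting a double drop, respectively a self--intersecting closed curve, into two drops with additive energy), and the same observation that this single candidate lies in all three classes, so all infima are attained.
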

\begin{proof}
By Definition \ref{dd} and Remark \ref{globallyreg} one sees that $n \leq \overline{n} \leq \tilde{n}$. Moreover in these three
minimization problems a candidate for the minimum is the symmetric double drop $\mathcal{D}_{\min}=\{\gamma^1,\gamma^2\}$ with $\gamma^1$ a minimizer 
for Problem \ref{problemdrops} (whose existence is established in Theorem~\ref{esistenzaminimogoccia}) and $\gamma^2$ the curve such that
$\gamma^2(t)=-\gamma^1(1-t)$, $t \in [0,1]$. Since $F(\mathcal{D}_{\min})=2m$, with $m$ defined in \eqref{limitato}, it follows that
$$ n \leq \overline{n} \leq \tilde{n} \leq 2m \, . $$
On the other hand,
a double drop $\mathcal{D}$ is a union 
of two different drops
 $\varphi^i:[0,1]\to\mathbb{R}^2$ of class $H^2$. We can suppose without loss of generality that $F(\varphi^2)\geq F(\varphi^1)$, hence
 $$
 F(\mathcal{D})=F(\varphi^1)+F(\varphi^2)\geq 2F(\varphi^1)\geq 2m\,.
 $$
We conclude that $
 2m=\tilde{n}\geq \overline{n}\geq n\geq 2m$
and, that in particular, the three infima are actually equal.
Since $F(\mathcal{D}_{\min})=2m$ and $\mathcal{D}_{\min}$ belongs to all three sets, it is a minimizer
for all three problems.
\end{proof}

We are now ready to characterize the optimal drop of Theorem~\ref{esistenzaminimogoccia}. Let us recall that Langer and Singer established in \cite[Theorem 0.1(a)]{langsing1} that the  ``Figure Eight" is the unique closed planar elastica (up to multiple coverings and isometries of $\mathbb{R}^2$) with self--intersections. Here the word elastica refers to a critical point of the functional $F$ on closed curves.

\begin{prop}\label{minimogoccia}
Up to isometries of $\mathbb{R}^2$ 
the ``Figure Eight" (see Figure \ref{otto}) is the unique minimizer of 
the functional $F$ between all planar closed curves with at least one self-intersection. One of the two drops of $\mathcal{F}$ is the 
unique minimizer for Problem~\ref{problemdrops}.
\end{prop}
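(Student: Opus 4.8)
The plan is to combine the existence result of Theorem~\ref{esistenzaminimogoccia} with the Langer--Singer classification and a careful analysis of what happens at the point $\gamma(0)=\gamma(1)$ of an optimal drop. First I would observe that, by Lemma~\ref{allmin}, the number $\overline n$ (the infimum of $F$ over closed $H^2$ curves $\varphi:\mathbb S^1\to\mathbb R^2$ with at least one self-intersection) equals $2m$ and is attained, precisely by a symmetric double drop $\mathcal D_{\min}=\{\gamma^1,\gamma^2\}$ built from an optimal drop $\gamma^1$ of Problem~\ref{problemdrops} via $\gamma^2(t)=-\gamma^1(1-t)$. By Remark~\ref{globallyreg} this symmetric double drop, suitably reparametrized, is a closed regular curve globally of class $H^2$; since it has a self-intersection at the origin, it is a competitor for the $\overline n$-problem, and it realizes the minimum. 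Hence every minimizer of $F$ among closed planar curves with at least one self-intersection has energy $2m$, and such a minimizer exists.

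Next I would show that any such minimizer $\varphi$ is in fact a (smooth) closed elastica, i.e. a critical point of $F$ among all closed curves. The point here is that $\varphi$ minimizes $F$ not just among self-intersecting closed curves but, as I will argue, among all closed curves: indeed, if some closed curve without self-intersection had $F<2m$, one could still compare — but more directly, it suffices that $\varphi$ is a critical point of $F$ under all smooth compactly-supported normal variations, because such variations are admissible (they preserve closedness, and a self-intersection, being transverse or handled by the general theory, persists or is irrelevant for criticality on an open dense set of the parameter). Then the regularity/Euler--Lagrange argument already invoked in the paper (as in \cite[Prop.~4.1]{daplu} and \cite{langsing}) upgrades $\varphi$ to $C^\infty$ and shows it solves $2\partial_s^2 k+k^3-k=0$ on all of $\mathbb S^1$; thus $\varphi$ is a closed planar elastica with at least one self-intersection. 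By \cite[Theorem 0.1(a)]{langsing1}, the only closed planar elasticae are the circle (and its multiple covers) and the ``Figure Eight'' (and its multiple covers). The circle (simply covered) has no self-intersection, and any multiple cover of the circle or of the Figure Eight has strictly larger energy than the simple Figure Eight (this is the content of the Langer--Singer classification together with $F(\mathcal F)=2m\le 4\pi$; a multiple $j$-cover scales the energy by a factor that forces $F>2m$ for $j\ge 2$). Hence $\varphi$ must be, up to isometry, the simple ``Figure Eight'' $\mathcal F$, and $F(\mathcal F)=2m$.

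Finally I would pass from the Figure Eight back to the optimal drop. Write $\mathcal F$ as the union of its two congruent halves $\mathcal F^1,\mathcal F^2$, cut at the self-intersection point; by symmetry each half is a drop with $F(\mathcal F^1)=F(\mathcal F^2)=\tfrac12 F(\mathcal F)=m$. Therefore $\mathcal F^1$ is a drop realizing the infimum $m$ in Problem~\ref{problemdrops}. Uniqueness up to isometry then follows by running the argument in reverse: if $\eta$ were any minimizing drop with $F(\eta)=m$, then by Lemma~\ref{allmin} the symmetric double drop $\{\eta,\eta^{\mathrm{sym}}\}$ (with $\eta^{\mathrm{sym}}(t)=-\eta(1-t)$) is a closed $H^2$ curve with a self-intersection and energy $2m$, hence a minimizer of $F$ among such curves, hence by the previous paragraph an isometric copy of $\mathcal F$; this forces $\eta$ to be an isometric copy of a half of $\mathcal F$, i.e. of $\mathcal F^1$.

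The main obstacle I anticipate is the step asserting that a minimizer of $F$ \emph{among self-intersecting closed curves} is automatically a critical point of $F$ \emph{among all closed curves} — one must check that the self-intersection constraint is not active, i.e. that small normal perturbations supported away from (a neighborhood of) the crossing are admissible and that a genuine self-intersection cannot be destroyed to first order, so that the first variation vanishes and the Euler--Lagrange equation holds on the full circle. A clean way around this is to note that by Lemma~\ref{allmin} the minimizer is a \emph{symmetric} double drop: one argues the Euler--Lagrange equation on each open drop $(0,1)$ (where the paper's regularity argument applies verbatim since the constraint $\gamma(0)=\gamma(1)$ only affects the endpoints), obtains two smooth elastica arcs, and uses the symmetry together with the first-variation boundary terms at the junction $P$ to glue them into a globally $C^\infty$ closed elastica. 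This reduces everything to the already-established machinery and to the Langer--Singer classification, which is the substantive input.
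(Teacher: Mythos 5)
Your proposal follows essentially the same route as the paper: Lemma~\ref{allmin} to identify the three infima and to produce a minimizer, the claim that a minimizer of the closed self--intersecting problem must be a closed elastica, the Langer--Singer classification \cite[Theorem 0.1(a)]{langsing1}, and the symmetry of $\mathcal{F}$ to halve the energy and characterize the optimal drop; you are in fact more explicit than the paper on the delicate points (activity of the self--intersection constraint, exclusion of multiple covers), and your reverse argument for uniqueness of the optimal drop is the implicit content of the paper's last lines. Two corrections are needed, however. First, the inequality you invoke to dispose of multiple covers, $F(\mathcal{F})=2m\le 4\pi$, is false: the circle is a drop of energy $4\pi$, so \eqref{limitato} only gives $m\le 4\pi$, i.e. $2m\le 8\pi$, and $8\pi$ is exactly the energy of the optimally rescaled doubly covered circle, so as written that competitor is not excluded. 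The clean fix is to use Lemma~\ref{energiaotto}: $\mathcal{F}$ is itself admissible for the $\overline{n}$--problem, hence $2m\le F(\mathcal{F})\approx 21.2<8\pi\le 4\pi j$ for every $j\ge 2$, and trivially $F(\mathcal{F})<jF(\mathcal{F})$; some such strict inequality must be supplied.

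Second, Lemma~\ref{allmin} shows that \emph{one particular} symmetric double drop is a minimizer of the $\overline{n}$--problem, not that every minimizer is a symmetric double drop. Consequently your ``clean way around'' the constraint--activity issue proves that any \emph{minimizing symmetric double drop} is a smooth closed elastica, hence $\mathcal{F}$; this suffices for the second assertion of the proposition (apply it to $\{\eta,\eta^{\mathrm{sym}}\}$ for an arbitrary minimizing drop $\eta$), but not by itself for the uniqueness of $\mathcal{F}$ among \emph{all} self--intersecting closed minimizers. For a general minimizer you still need the first--variation argument you flagged: variations supported away from the parameters of one self--intersection are admissible and give the Euler--Lagrange equation off finitely many points, and one must then rule out corner or point--force terms at the crossing to conclude the curve is globally an elastica. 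To be fair, this is precisely the step the paper compresses into ``the minimizer \dots\ has to be globally an elastica'' without further justification, so on this point your attempt is at least as careful as the published proof; just be aware that the first uniqueness claim of the statement rests on it.
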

\begin{proof}
Let $\gamma_{min}:[0,1]\to\mathbb{R}^2$ be a minimizer for Problem~\ref{problemdrops}
with $\gamma_{min}(0)=\gamma_{min}(1)$ placed at the origin of $\mathbb{R}^2$.
In particular 
this curve satisfies the Euler Lagrange equation $2\partial_s^2k+k^3-k=0$ in $(0,1)$, it is smooth and $k(0)=\kappa(1)=0$
(see~\cite[Proposition 4.2]{daplu}).
We symmetrize the curve with respect to the origin, and so we produce a
symmetric double drop $\mathcal{SD}_{min}$, which is a minimizer 
of $F$ among double drops, symmetric double drops and 
closed regular  curves 
 $\varphi:\mathbb{S}^1\to\mathbb{R}^2$ of class $H^2$
in which  there exists at least two points 
$s_1\neq s_2\in \mathbb{S}^1$ such that $\gamma(s_1)=\gamma(s_2)$. 
We can parametrize $\mathcal{SD}_{min}:[-1,1]\to\mathbb{R}^2$ as follow:
\begin{equation*}
\mathcal{SD}_{min}(t)=
\begin{cases}
\gamma_{min}(t)\quad\quad\;\,\,\text{for}\;t\in[0,1]\\
-\gamma_{min}(-t)\quad\text{for}\;t\in[-1,0)\,.\\
\end{cases}
\end{equation*} 
Then  $\mathcal{SD}_{min}$ satisfies the Euler Lagrange equation $2\partial_s^2k+k^3-k=0$ in $(-1,0)$ and in $(0,1)$, it  is piecewise smooth and
globally $C^3$. By continuity  $\mathcal{SD}_{min}$ satisfies the Euler Lagrange equation $2\partial_s^2k+k^3-k=0$ also at the junction point 
and thus it is globally an elastica.
Notice that this property is shared by all minimizers in the class of symmetric double drops.
The unique closed elastica with at least one
self-intersection is the ``Figure Eight" $\mathcal{F}$~\cite[Theorem 0.1 (a)]{langsing1}.
Hence the minimizer among symmetric double drop is unique and coincides 
with the ``Figure Eight" $\mathcal{F}$.
We stress the fact that we took an arbitrary minimizer among drops
$\gamma_{min}$ and with our reflection argument we produced the
``Figure Eight". As a consequence we conclude that also the minimizer for 
Problem~\ref{problemdrops} is unique and it is  one of the two drops of 
$\mathcal{F}$.
The energy of the minimizer among drops $\mathcal{D}_{min}$ or curves with at least one self--intersection $\mathcal{S}_{min}$
is twice the energy of the unique minimizer among drops $\gamma_{min}$.
Thus each one of the two drops of $\mathcal{D}_{min}$ and $\mathcal{S}_{min}$
is $\gamma_{min}$. The  two drops are attached at the origin is such a way that 
$\mathcal{D}_{min}$ and $\mathcal{S}_{min}$ are globally $C^1$ and hence the only possibility is the ``Figure Eight" $\mathcal{F}$ and this concludes the proof.
\end{proof}

It remains to compute the energy of the ``Figure Eight". Langer and Singer already observed in \cite{langsing} that the Euler-Lagrange equation
 of the elasticae can be integrated using Jacobi-Elliptic functions. 
In~\cite{explicitelasticae} the authors found a dynamical system 
that the components of a planar elastica parametrized by arc-length satisfy. 
From this dynamical system description one is able to find an explicit parametrization 
of the \lq\lq Figure Eight\rq\rq ~depending only on well defined parameters. 
Thanks to this representation we were able to compute in \cite[Prop.6.4]{daplu} the energy of $\mathcal{F}$.

\begin{lem}[see \cite{daplu}]\label{energiaotto}
Consider the optimal rescaling $\mathcal{F}$ of the ``Figure Eight" ,
then $$F(\mathcal{F})\approx 21.2075.$$
\end{lem}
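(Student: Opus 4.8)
The plan is to obtain an explicit (or semi-explicit) parametrization of the ``Figure Eight'' elastica and then compute the value of $F$ on its optimal rescaling. First I would recall, following~\cite{langsing}, that any planar elastica satisfies the first-order conservation law obtained by multiplying \eqref{EL:curve} by $\partial_s k$ and integrating: there is a constant $E$ with $(\partial_s k)^2 = -\frac14 k^4 + \frac12 k^2 + E$ (after normalising $\alpha=1$), so the curvature $k(s)$ is a Jacobi elliptic function of arclength. For the ``Figure Eight'' one fixes the particular integration constant that produces the symmetric closed solution with exactly one self-intersection; following the dynamical-system approach of~\cite{explicitelasticae} recalled just above the statement, the components of the arclength parametrization can be written in terms of the Jacobi functions $\mathrm{sn},\mathrm{cn},\mathrm{dn}$ and the Jacobi amplitude, depending on a single modulus parameter $p$ determined by the closure condition.

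Next I would impose closure: the net turning and the net displacement over one period must vanish, which pins down the modulus $p$ as the (unique) root of an equation involving complete elliptic integrals $K(p)$ and $E(p)$ (this is exactly the transcendental condition isolating the ``Figure Eight'' among the one-parameter family of wavelike elasticae). With $p$ fixed, the total length $L(\mathcal{F})$ and the bending energy $E(\mathcal{F})$ of the un-rescaled figure eight are each expressible as explicit combinations of $K(p)$ and $E(p)$; concretely $E = \int k^2\,\mathrm{d}s$ reduces to an elliptic integral by the conservation law, and $L$ is just the length of one full period of the curvature wave. Then I would invoke the equipartition identity \eqref{equie}: for the optimal rescaling $F(\widetilde{\mathrm R}\Gamma) = 2\sqrt{E(\Gamma)L(\Gamma)}$, so $F(\mathcal{F}) = 2\sqrt{E(\mathcal{F})L(\mathcal{F})}$ with $E(\mathcal{F}),L(\mathcal{F})$ the quantities just computed for any fixed representative. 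Evaluating the elliptic integrals numerically at the computed modulus $p$ then yields $F(\mathcal{F}) \approx 21.2075$.

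The main obstacle is the derivation and verification of the closed-form parametrization of the ``Figure Eight'' and, in particular, solving the closure equation for the correct modulus: one must be careful that the chosen branch of the elliptic solution is genuinely the closed, once-self-intersecting curve (and not a multiple cover or an open wavelike elastica), and that the normalisation matches $\alpha=1$. Since all of this bookkeeping with Jacobi elliptic functions and the numerical evaluation of the resulting elliptic integrals has already been carried out in detail in~\cite[Prop.~6.4]{daplu} via the framework of~\cite{explicitelasticae}, the proof here is essentially a citation: I would state that the value follows from applying the explicit parametrization of~\cite{explicitelasticae} together with \eqref{equie}, and refer the reader to~\cite{daplu} for the full computation.

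\begin{proof}[Proof of Lemma~\ref{energiaotto}]
By Proposition~\ref{minimogoccia} the relevant curve is the ``Figure Eight'' $\mathcal{F}$, the unique closed planar elastica with a self-intersection. Following~\cite{langsing}, the Euler--Lagrange equation~\eqref{EL:curve} admits the first integral $(\partial_s k)^2 = -\tfrac14 k^4 + \tfrac12 k^2 + E$ for a suitable constant $E$, so that $k$ is a Jacobi elliptic function of arclength. Using the explicit dynamical-system parametrization of planar elasticae found in~\cite{explicitelasticae}, one obtains a representation of $\mathcal{F}$ depending only on a modulus parameter fixed by the closure conditions, and from this one computes the length $L(\mathcal{F})$ and the bending energy $E(\mathcal{F})$ of a fixed representative in terms of complete elliptic integrals. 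The optimal rescaling then has energy $F(\mathcal{F}) = 2\sqrt{E(\mathcal{F})L(\mathcal{F})}$ by the equipartition identity~\eqref{equie}. Carrying out the numerical evaluation of the resulting elliptic integrals, as done in detail in~\cite[Prop.~6.4]{daplu}, gives $F(\mathcal{F}) \approx 21.2075$.
\end{proof}
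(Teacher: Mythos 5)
Your proposal is correct and follows essentially the same route as the paper: the paper also reduces this lemma to the explicit Jacobi-elliptic parametrization of the ``Figure Eight'' from~\cite{explicitelasticae} and delegates the actual computation of the energy to~\cite[Prop.~6.4]{daplu}, the optimal-rescaling value being obtained exactly via the equipartition identity~\eqref{equie}. Your added sketch of the first integral of~\eqref{EL:curve} and of the closure condition fixing the elliptic modulus is consistent with that cited computation, so nothing is missing.
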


Let us note here that the words \lq\lq optimal rescaling\rq\rq~would not be necessary in the statement once one defines the ``Figure Eight" as the unique (up to multiple coverings and isometries of $\mathbb{R}^2$) closed planar critical point of $F=F_1$ with self--intersections.

\subsection{Minimizers are Theta--networks}

We are ready to establish the main result of the paper.

\begin{thm}
There exists a minimizer of Problem~\ref{Pdoppiabolla}.
\end{thm}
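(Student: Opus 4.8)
The strategy is to combine the compactness/existence result (Corollary~\ref{esistenzatrenet}) with an energy comparison that rules out the degenerate case. By Corollary~\ref{esistenzatrenet} there is a minimizer $\Gamma_{\min}$ of $\overline{F}$ among $3$--networks of class $H^2$, and it is either a Theta--network or a ``degenerate'' Theta--network. If we can show that no ``degenerate'' Theta--network can attain $\inf \overline{F}$, then $\Gamma_{\min}$ is a genuine Theta--network; since $\overline{F}$ and $F$ coincide on Theta--networks and $\overline{F} \le F$ everywhere on Theta--networks by Theorem~\ref{rilassato}, this $\Gamma_{\min}$ realizes the infimum in Problem~\ref{Pdoppiabolla}, which is therefore attained.

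\textbf{Key steps.} First I would recall the upper bound: by~\eqref{bddenergy} the standard double bubble (optimally rescaled) gives
\begin{equation*}
\inf\{F(\Gamma)\mid \Gamma \text{ is a Theta--network}\} \le \tfrac{2}{3}\sqrt{8\pi(8\pi+3\sqrt3)} \approx 18.4059\,,
\end{equation*}
where here $F = E + L = 2E$ on the optimal rescaling, so in fact the relevant competitor value is $F(\mathcal B_{\overline r}) \approx 2\cdot 18.4059$; more to the point, I would simply use that there exists a Theta--network $\mathcal B$ with $\overline{F}(\mathcal B) = F(\mathcal B) \approx 18.4059 \cdot 2$ — wait, one must be careful, but the quantitative claim needed is only that $\inf_{\text{Theta}} F$ is \emph{strictly smaller} than the energy of any degenerate Theta--network. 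Second, I would bound the energy of a degenerate Theta--network from below. A ``degenerate'' Theta--network consists of two regular $H^2$ curves $\gamma^1, \gamma^2$ meeting at one point $P$ with angles in pairs of $120^\circ$ and $60^\circ$; each $\gamma^i$ is a drop, so $\{\gamma^1,\gamma^2\}$ is a double drop, and by Lemma~\ref{allmin} its energy is $\ge n = 2m$. By Proposition~\ref{minimogoccia} and Lemma~\ref{energiaotto}, $2m = F(\mathcal F) \approx 21.2075$. Third, I would compare: the double bubble competitor has energy strictly below $21.2075$ (the precise numerics $F(\mathcal B_{\overline r}) = \tfrac{4}{3}\sqrt{8\pi(8\pi+3\sqrt3)} \approx 36.81$ need checking against the convention, but the paper's Introduction explicitly asserts ``the energy of the standard double bubble is strictly less than the energy of the Figure Eight''), so $\inf \overline{F} < F(\mathcal F) = $ (minimal energy of a degenerate Theta--network). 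Hence $\Gamma_{\min}$ cannot be degenerate, so it is a Theta--network, and the infimum in Problem~\ref{Pdoppiabolla} is attained by $\Gamma_{\min}$.

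\textbf{Main obstacle.} The delicate point is the strict numerical inequality between the double--bubble energy and $2m = F(\mathcal F)$: this is what forces the minimizer out of the degenerate stratum, and it relies on the explicit computation of $F(\mathcal F)\approx 21.2075$ from~\cite{daplu} (Lemma~\ref{energiaotto}) together with the explicit value~\eqref{energiadellabolla} for the double bubble. One must also make sure the comparison is between the \emph{same} functional values — i.e. that the optimal rescaling is used on both sides, since $\overline{F}$ is defined on constant--speed parametrizations and the energies quoted are for optimally rescaled configurations; with equipartition~\eqref{equie} this bookkeeping is routine. A secondary subtlety is verifying that replacing the degenerate minimizer's two curves by the double bubble is a legitimate comparison, i.e. that $\inf\overline F \le \overline F(\mathcal B) = F(\mathcal B)$, which is immediate since the double bubble is a Theta--network and $\overline F = F$ there. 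Once these numerics are in place the argument closes.
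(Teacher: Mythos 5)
Your proposal is correct and follows essentially the same route as the paper: existence of a minimizer of $\overline{F}$ (Corollary~\ref{esistenzatrenet}), the lower bound $\overline{F}\geq F(\mathcal{F})\approx 21.2075$ on ``degenerate'' Theta--networks via the drop/double--drop results (Lemma~\ref{allmin}, Proposition~\ref{minimogoccia}, Lemma~\ref{energiaotto}), comparison with the standard double bubble, and the relaxation statement to return to Problem~\ref{Pdoppiabolla}. Your only hedge can be removed: the value in~\eqref{energiadellabolla} is already the full penalized energy of the optimally rescaled double bubble, namely $2\sqrt{E\,L}$ with $E=8\pi/3$ and $L=8\pi/3+\sqrt{3}$ for unit radius, so $F(\mathcal{B}_{\overline{r}})=\tfrac{2}{3}\sqrt{8\pi(8\pi+3\sqrt{3})}\approx 18.41<21.21\approx F(\mathcal{F})$, and no extra factor of $2$ appears.
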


\begin{proof}
In Proposition~\ref{minimogoccia} we have shown that
 the minimum of problem~\ref{problemdrops} is 
attained by one of the two drops of 
the``Figure Eight" $\mathcal{F}$. As a consequence, 
the energy of 
any ``degenerate" Theta--network $\Gamma$ is greater of equal than
the energy of $\mathcal{F}$, that is 
$\overline{F}(\Gamma)\geq \overline{F}({\mathcal{F}})\approx 21.2075$.
By~\eqref{energiadellabolla} the energy 
of the optimal rescaling of the standard double
bubble is  $F(\mathcal{B}_{\overline{r}})\approx 18,4059$.
We have
exhibit a Theta--network with strictly less energy than every 
 ``degenerate" Theta--network.
 This shows that each minimizer for $\overline{F}$ (whose existence is
established in Corollary \ref{esistenzatrenet}) is necessarily
a Theta--network. 
Since $\overline{F}$ is the relaxation of $F$ the first part of the statement follows.
\end{proof}

With an argument based on Theorem \ref{stimamodulok} 
we can show that minimizers are injective.

\begin{prop}\label{iniet}
Every minimizer of Problem~\ref{Pdoppiabolla} is an injective Theta--network.
\end{prop}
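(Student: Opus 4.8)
The plan is to argue by contradiction: suppose $\Gamma = \{\gamma^1, \gamma^2, \gamma^3\}$ is a minimizer of Problem~\ref{Pdoppiabolla} which is not injective, and produce a competitor Theta--network with strictly smaller energy, contradicting minimality. By the previous theorem, $\Gamma$ is a genuine Theta--network (not a degenerate one), and by the Euler--Lagrange proposition each $\gamma^i$ is a smooth elastica satisfying \eqref{EL:curve}. Non-injectivity means either some curve $\gamma^i$ has a self-intersection, or two distinct curves $\gamma^i, \gamma^j$ cross (share an interior point). The key tool is the Gauss--Bonnet-type estimate of Theorem~\ref{stimamodulok}/Corollary~\ref{variantGB} on the total variation of curvature of continuous piecewise $W^{2,1}$ closed curves with self-intersections: extra self-intersections force $\int |k|\,{\rm d}s$ to be larger, hence (via Lemma~\ref{formula2c}) force the energy up.

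First I would handle the case of a self-intersecting curve. If $\gamma^i$ has an interior self-intersection, then considering a closed sub-loop of $\gamma^i$ together with an appropriate companion, the improved Gauss--Bonnet bound gives a strictly larger lower bound on $\int_{\Gamma_{i,j}} |k|\,{\rm d}s$ than the $\tfrac{4\pi}{3}$ used in \eqref{lowerbound}; feeding this through Lemma~\ref{formula2c} and summing as in \eqref{lowerbound} yields $F(\Gamma) > 4\pi$ — but that alone is not a contradiction since we only know $F(\Gamma) \le F(\mathcal{B}_{\overline r}) \approx 18.41$. So the cleaner route is \emph{surgery}: at the self-intersection point $p = \gamma^i(t_1) = \gamma^i(t_2)$, the two tangent directions $\tau^i(t_1), \tau^i(t_2)$ are distinct (if they coincided one could still perturb), and one removes the loop and reconnects, strictly decreasing both the length and the elastic energy, or at least not increasing them while creating a network that can then be smoothed to strictly lower energy. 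Then I would re-optimize the rescaling factor via $\widetilde{\mathrm R}$ so the comparison is against the minimal energy in the rescaling class; since removing a loop strictly drops $\sqrt{E(\Gamma)L(\Gamma)}$, we get $F(\widetilde{\mathrm R}\,\Gamma') < F(\Gamma)$, contradicting minimality.

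Next, the case of two curves $\gamma^i, \gamma^j$ crossing transversally at an interior point: here I would perform the analogous reconnection surgery (swap the arcs at the crossing), which again does not increase total length and strictly decreases the curvature integral after smoothing the newly created corner, then re-optimize the rescaling to conclude $F < F(\Gamma)$. A subtle point is that the surgery may a priori produce a configuration that is no longer a Theta--network (wrong angle conditions at the junctions, or a disconnected set); one must check that, after reconnecting and possibly inserting a short smoothing arc near the surgery point, the resulting object still lies in the admissible class — or use that it lies in the larger class on which $\overline F$ is defined and is finite, so that $\inf \overline F$ is not attained at $\Gamma$, again a contradiction since $\overline F = F$ on Theta--networks and the minimizer of $\overline F$ is a Theta--network. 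The main obstacle I anticipate is making the surgery-plus-smoothing argument rigorous while keeping the angle constraints at the junctions $P^1, P^2$ intact; the Gauss--Bonnet estimate of Theorem~\ref{stimamodulok} is precisely designed to control how much curvature a self-intersection must carry, and I expect the cleanest writeup uses that estimate to bound $F(\Gamma)$ from below by a quantity strictly exceeding $F(\mathcal{B}_{\overline r})$ in every non-injective configuration, thereby bypassing delicate surgery entirely — so the real work is checking that each type of non-injectivity (self-loop of one curve, crossing of two curves, and the junction points themselves) forces enough extra total curvature.
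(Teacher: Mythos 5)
There is a genuine gap. First, note that \emph{injective} in Definition~\ref{3network} means only that each individual curve of the network is injective; intersections between two distinct curves are not excluded by the statement (global embeddedness is explicitly left as an expectation, not a theorem, in the paper). So the entire second half of your argument --- surgery at a transversal crossing of $\gamma^i$ and $\gamma^j$ --- addresses something neither claimed nor needed, and arc-swapping at such a crossing would in any case change the topology of the network. For the part that is actually claimed, your preferred route (push the Gauss--Bonnet lower bound above $F(\mathcal{B}_{\overline{r}})$) cannot succeed with Theorem~\ref{stimamodulok} alone: a self-intersection of one curve produces a drop $\mathcal{D}$, and Corollary~\ref{variantGB1curve} plus Lemma~\ref{formula2c} only give $F(\mathcal{D})\geq 2\pi\approx 6.28$; adding the bound $F(\gamma^2\cup\gamma^3)\geq \tfrac{8\pi}{3}\approx 8.38$ from Corollary~\ref{variantGB} yields roughly $14.7$, which is well below $F(\mathcal{B}_{\overline{r}})\approx 18.41$, so no contradiction follows. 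The decisive ingredient, which your sketch never invokes, is the \emph{sharp} lower bound for drops: by Proposition~\ref{minimogoccia} and Lemma~\ref{energiaotto} every drop has energy at least $\tfrac12 F(\mathcal{F})\approx 10.60$ (half of the Figure Eight), and then
\begin{equation*}
F(\Gamma)\;\geq\; \tfrac12 F(\mathcal{F})+\tfrac{8\pi}{3}\;\approx\; 18.98\;>\;F(\mathcal{B}_{\overline{r}})\,,
\end{equation*}
which is exactly how the paper derives the contradiction (after a first step showing that two self-intersecting curves would already force $F(\Gamma)\geq F(\mathcal{F})\approx 21.2$).

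Your fallback, the loop-removal surgery with re-optimized rescaling, is not carried out and faces real obstacles: excising a loop creates a corner at the reconnection point, so the result is not an admissible Theta--network; smoothing the corner is not automatically energy-decreasing for the penalized elastic energy, and you give no estimate showing the smoothed competitor has strictly smaller $F$ (nor that the angle conditions at $P^1,P^2$ and the class membership survive). As written, the proof does not close; it needs the minimal-drop characterization (or an equivalent quantitative bound) that your sketch only gestures at.
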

\begin{proof}
For simplicity we divide the proof into two steps.
\begin{itemize}
\item {\it{Step 1: At most one curve  has self--intersections}}

Suppose by contradiction that at least two curves, let us say
$\gamma^1$ and $\gamma^2$, 
of a minimizer $\Gamma$ have self-intersections.
Then for $i\in\{1,2\}$ we can decompose 
$\gamma^i$
as the union of two curves 
$\mathcal{D}^i$ and $\left(\gamma^i\setminus \mathcal{D}^i\right)$,
with $\mathcal{D}^i$ a drop.
By Proposition~\ref{minimogoccia} and Lemma~\ref{energiaotto}
we have that
\begin{equation*}
F(\Gamma)\geq F(\gamma^1)+F(\gamma^2)
\geq F(\mathcal{D}^1)+ F(\mathcal{D}^2)\geq F(\mathcal{F})
>F(\mathcal{B}_{\overline{r}})\,,
\end{equation*} 
hence such a $\Gamma$ is not a minimizer, a contradiction.
\item 
{\it{Step 2: Each curve is injective}}

Suppose that 
the curve $\gamma^1$
has self-intersections.
As before 
$\gamma^1$ can be written as the union of two curves 
$\mathcal{D}$
and $\left(\gamma^1\setminus \mathcal{D}\right)$,
with $\mathcal{D}$ a drop and clearly $F(\gamma^1)\geq F(\mathcal{D})$.
Thanks to Corollary~\ref{minimogoccia} we know that
the energy of $\mathcal{D}$ is greater or equal than the energy of one drop
of 
the ``Figure Eight" $\mathcal{F}$.
Moreover combining Corollary~\ref{variantGB} with Lemma~\ref{formula2c} we get
$F(\gamma^2\cup\gamma^3)\geq \frac{8\pi}{3}$.
Hence 
$$
F(\Gamma)\geq F(\mathcal{D})+F(\gamma^2\cup\gamma^3)\geq
\frac{1}{2}F(\mathcal{F})+ \frac{8\pi}{3}\approx 18.9813 >F(\mathcal{B}_{\overline{r}})\,,
$$
a contradiction.\qedhere
\end{itemize}
\end{proof}

Now that we know that each minimizer is a Theta--network it is interesting to understand which
boundary conditions a minimizer (or more generally a critical point) in the class of Theta--networks satisfies. The following result is established in \cite[Prop.4.1]{daplu}. See also \cite{baganu}.

\begin{prop}[see \cite{daplu}]
Let $\Gamma=\{\gamma^1,\gamma^2,\gamma^3\}$ be a Theta--network that is a critical point for $F$. Then each curve $\gamma^i:[0,1] \to\mathbb{R}^2$ of $\Gamma$ is $C^{\infty}$ and satisfies \eqref{EL:curve} on $(0,1)$. Moreover, at each junction the following conditions are satisfied
\begin{equation}\label{condjunc}
\sum_{i=1}^3 k^i(x)=0 \mbox{ and } \sum_{i=1}^3 \left(2 \partial_s k^i \nu^i + (k^i)^2\partial_s \gamma^i \right)(x)=0 \mbox{ at }x=0,1 \,.
\end{equation}
Here $k^i$ is the scalar curvature of $\gamma^i$ and  $\nu^i$ is the normal to $\gamma^i$, i.e. the counter-clockwise rotation of $\frac\pi2$ of the tangent $\partial_s \gamma^i$.
\end{prop}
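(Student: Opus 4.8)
The plan is to obtain the stated identities as the vanishing of the first variation of $F$ along \emph{admissible} variations of $\Gamma$, that is, smooth one--parameter families $\Gamma_\varepsilon=\{\gamma^1_\varepsilon,\gamma^2_\varepsilon,\gamma^3_\varepsilon\}$ of Theta--networks with $\Gamma_0=\Gamma$; as this computation is carried out in full in~\cite[Prop.~4.1]{daplu} (see also~\cite{baganu}), I only describe its structure. Writing $\phi^i:=\partial_\varepsilon\gamma^i_\varepsilon\big|_{\varepsilon=0}$, the requirement that each $\Gamma_\varepsilon$ be a Theta--network forces, at every $x\in\{0,1\}$, that the three infinitesimal displacements of the junction agree, $\phi^1(x)=\phi^2(x)=\phi^3(x)=:\psi_x$, and that the three unit tangents rotate by one common infinitesimal angle --- the linearisation of ``unit vectors, pairwise at $120$ degrees'' --- so that there is $\omega_x\in\mathbb{R}$ with $\langle\partial_s\phi^i,\nu^i\rangle(x)=\omega_x$ for $i=1,2,3$.

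First I would test with variations supported in the interior of a single curve, i.e.\ $\phi^j\equiv 0$ for $j\neq i$ and $\phi^i\in H^2_0(0,1)$; these are automatically admissible and yield the weak form of~\eqref{EL:curve} on $\gamma^i$. The regularity bootstrap already used earlier in this section (choosing test functions to get successively $k^i\in L^\infty$, then $k^i\in W^{1,\infty}$, then $\gamma^i\in H^4$, and finally elliptic regularity plus a bootstrap) upgrades this to $\gamma^i\in C^\infty([0,1])$ solving~\eqref{EL:curve} pointwise. From this point on every bulk term in the first variation vanishes and only the boundary contributions at $x=0,1$ survive.

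Next I would record that the boundary part of the first variation of $F(\gamma^i)=\int_{\gamma^i}\big((k^i)^2+1\big)\,\mathrm{d}s^i$ is, after integration by parts, of the form $\big[\,2k^i\langle\partial_s\phi^i,\nu^i\rangle-\big\langle 2\partial_s k^i\,\nu^i+(k^i)^2\tau^i-\tau^i,\,\phi^i\big\rangle\,\big]_0^1$, the last $-\tau^i$ coming from the length term. Choosing admissible variations that freeze one junction entirely and, at the other, translate the common junction point in an arbitrary direction $\psi$ while keeping each $\tau^i$ fixed (so $\phi^i=\psi$ and $\langle\partial_s\phi^i,\nu^i\rangle=0$ there), the relation $\sum_i\tau^i=0$ at that junction absorbs the length contributions $\sum_i\langle\tau^i,\psi\rangle$, and $\delta F(\Gamma)=0$ forces $\sum_{i=1}^3\big(2\partial_s k^i\,\nu^i+(k^i)^2\tau^i\big)=0$ there; since $\tau^i=\partial_s\gamma^i$ this is the second identity in~\eqref{condjunc}. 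Choosing instead variations that fix the junction point ($\phi^i=0$) but rotate the three tangents by a common infinitesimal angle $\omega$ leaves only the term $-2\omega\sum_i k^i$ at that junction, whence $\sum_{i=1}^3 k^i=0$ there; doing this at both $x=0$ and $x=1$ gives all of~\eqref{condjunc}.

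The step I expect to be the main obstacle is the construction of the admissible variations: one has to verify that the prescribed boundary data (an arbitrary $\psi$ and a common tangent rotation $\omega$ at one junction, together with vanishing data at the other) can actually be realised by a genuine smooth family of regular Theta--networks for small $|\varepsilon|$. This is done by extending the boundary data into the interior of each curve with a fixed cutoff and checking that regularity and the $120$ degree angle condition persist for $|\varepsilon|$ small; it is routine but it is the only place where the geometric constraints at the junctions must be treated with care.
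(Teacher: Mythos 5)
Your proposal is correct and follows essentially the same route as the paper, which itself gives no proof of this proposition but defers to \cite[Prop.~4.1]{daplu} (see also \cite{baganu}), where exactly this first--variation argument is carried out: interior variations plus the bootstrap give \eqref{EL:curve} and smoothness, and admissible junction variations (a common translation $\psi$ and a common tangent rotation $\omega$, with $\sum_i\tau^i=0$ cancelling the length boundary terms) yield \eqref{condjunc}. The only point needing care, as you note, is the construction of admissible variations realising the prescribed junction data, which is handled there by a standard cutoff extension.
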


A direct consequence of \eqref{condjunc} is that the standard double bubble of radius $r$ is not a critical point of $F$, being the second condition in \eqref{condjunc} not satisfied. Indeed, consider the double bubble given by the curves $\gamma^1,\gamma^2,\gamma^3$ as in Figure \ref{bubble} with $\gamma^1$ oriented counter-clockwise, $\gamma^2$ oriented from the right to the left and $\gamma^3$ oriented clockwise. Being the curvature constant it is clear that $\partial_s k^i \equiv 0$ for $i=1,2,3$. On the other hand $k^2\equiv 0$ whereas $k^3(x)=-k^1(x)\equiv -r^{-1}$ for all $x \in [0,1]$. Since $\partial_s \gamma^1(0)=(\frac12,\frac12 \sqrt{3})$ and $\partial_s \gamma^2(0)=(\frac12,-\frac12 \sqrt{3})$ it follows
$$ \sum_{i=1}^3 \left(2 \partial_s k^i \nu^i + (k^i)^2\partial_s \gamma^i \right)(0) = r^{-2} (1,0) \ne (0,0) \, .$$

Among the qualitative properties that a minimizer
could have, we  expect global embeddedness, i.e. that there are no intersections (except at the junction points) among the curves of a minimal network. 
It seems also plausible that one of the three curves of the minimizer is a straight line and that the minimizer is symmetric, as shown in Figure~\ref{figconjecture}. 

It would also be interesting to understand what happens if one consider the same minimization problem in $\mathbb{R}^n$, instead of $\mathbb{R}^2$.

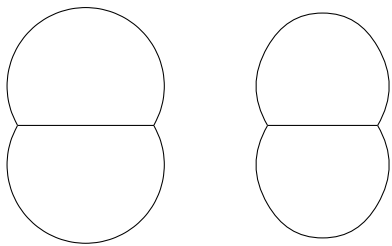
\begin{figure}[H]
\begin{center}
\begin{tikzpicture}[scale=0.17]
\draw[color=black,scale=6.15,domain=-2.09: 2.09,
smooth,variable=\t,shift={(0,0)},rotate=0]plot({1.*sin(\t r)},
{1.*cos(\t r)}) ; 
\draw[color=black,scale=6.15,domain=-2.09: 2.09,
smooth,variable=\t,shift={(0,-1)},rotate=180]plot({1.*sin(\t r)},
{1.*cos(\t r)}) ; 
\draw[scale=6.15, black]
(-0.87,-0.5)--(0.87,-0.5);
\end{tikzpicture}\qquad\quad
\begin{tikzpicture}[scale=0.17]
\draw[rotate=180, shift={(0,6.16)}, scale=6.15, black]
(0.7,-0.5)to[out= 60,in=-60, looseness=1] (0.7,0.5)
(0.7,0.5)to[out= 120,in=0, looseness=1] (0,0.93)
(-0.7,-0.5)to[out= 120,in=-120, looseness=1] (-0.7,0.5)
(-0.7,0.5)to[out= 60,in=180, looseness=1] (0,0.93);
\draw[scale=6.15, black]
(0.7,-0.5)to[out= 60,in=-60, looseness=1] (0.7,0.5)
(0.7,0.5)to[out= 120,in=0, looseness=1] (0,0.93)
(-0.7,-0.5)to[out= 120,in=-120, looseness=1] (-0.7,0.5)
(-0.7,0.5)to[out= 60,in=180, looseness=1] (0,0.93);
\draw[scale=6.15, black]
(-0.7,-0.5)--(0.7,-0.5);
\draw[white]
(-0.01,-12.3)--(0.01,-12.3);
\end{tikzpicture}
\end{center}
\caption{Left: The standard double bubble. Right: A possible minimizer. This configuration has been
(numerically) obtained by R.~N\"{u}rnberg, letting evolve the
double bubble by the $L^2$--gradient flow of the energy $F$ (see also~\cite{baganu}).}\label{figconjecture}
\end{figure}

\section{Generalized Theta--networks}

Some of the results just presented can be generalized to a certain family of $3$-networks 
of class $H^2$ whose curves meet in junctions with fixed angles, 
that we call generalized Theta--networks. This has already been observed in~\cite{daplu}.

\begin{defn}\label{genThetanetwork}
Let $\alpha_1,\alpha_2,\alpha_3 \in (0,2\pi)$ be such that $\sum_{i=1}^3 \alpha_i=2\pi$. Then a \emph{generalized $(\alpha_1,\alpha_2,\alpha_3)$ Theta--network} is a $3$--network $\Gamma=\{\gamma^1,\gamma^2,\gamma^3\}$ such that the three curves are
of class $H^2$, regular and meet at each triple junctions with fixed angles $\alpha_1$ between $\gamma^1$ and $\gamma^2$, $\alpha_2$ between $\gamma^2$ and $\gamma^3$ and, as a consequence, $\alpha_3$ between $\gamma^3$ and $\gamma^1$.
\end{defn}

Without loss in generality we may assume that 
$0 <\alpha_1 \leq\alpha_2\leq \alpha_3 <2\pi$. Also in this setting the scaling arguments apply and hence we can consider simply the functional $F$. 

\smallskip

\paragraph{Upper bound on the energy.} 

A competitor is the generalized $(\alpha_1,\alpha_2,\alpha_3)$ Theta--network 
given by two arc of circles joined by a segment. The energy of the optimal rescaling is computed
 in~\cite{daplu} and is given by
$$
F(B_{opt})
=4\sqrt{\alpha_1+\frac{\alpha_2\sin\alpha_2}{\sin\alpha_1}}
\sqrt{\alpha_1+ \alpha_2\frac{\sin\alpha_1}{\sin\alpha_2}+\sin\alpha_1}\,.
$$ 

\smallskip

\paragraph{Extension.}

In analogy to Definition~\ref{funzionalerilassato} we define $\overline{F}$ 
the extension of $F$ to $3$--networks $\Gamma$ of class $H^2$.

\begin{defn}\label{degegeneral}
A  \textit{``degenerate'' 
generalized $(\alpha_1,\alpha_2,\alpha_3)$ Theta--network} is a network  composed by two regular curves of class $H^2$,
forming angles in pairs of $\alpha_i$ and $\pi-\alpha_i$
degrees, for some $i\in\{1,2,3\}$,
and by a curve of length zero.
\end{defn}

Then the functional $\overline{F}$  is defined as follows:
$
\overline{F}(\Gamma)=
F(\Gamma)$ if $\Gamma$ is a generalized $(\alpha_1,\alpha_2,\alpha_3)$ Theta--network, 
$$\overline{F}(\Gamma)= \sum_{i=1}^2 \int_{\gamma_i} (k^i)^2+1 \,{\rm{d}}s^i$$
if $\Gamma=\{\gamma^1,\gamma^2\}$ is a ``degenerate'' 
generalized $(\alpha_1,\alpha_2,\alpha_3)$ Theta--network and equal to $
+\infty$ otherwise.

\smallskip

\paragraph{Existence of minimizers of the relaxed energy.}

To show the  compactness  $\overline{F}$ 
it was crucial the application of Theorem~\ref{stimamodulok}. 
The same arguments work also in this setting. 
In particular with a variant of Corollary~\ref{variantGB} (due to the different angles) one gets an estimate similar to~\eqref{eq:boundbelow}  from which one still can argue that only the length of one of the curves might go to zero at the limit. 

\medskip

Once the existence of minimizers for $\overline{F}$ is established, it remains to see when one can show that the minimizers are not ``degenerate'' Theta--networks. 
Reasoning as above, in~\cite[pages 112-113]{daplu} it is shown that this is the case when $\alpha_1 \leq \alpha_2 \leq \frac34 \pi$. For general angles  $(\alpha_1,\alpha_2,\alpha_3)$ this is still an open problem.


\appendix

\section{An estimate on the curvature}
Consider a continuous, piecewise $W^{2,1}$, regular
closed curve $\gamma:[0,1]\to\mathbb{R}^2$
possibly with angles (points in which the unit tangent vector does not
change in a continuous way) 
and possibly with self--intersections.
Our aim is to find a uniform estimate for the integral of the modulus of the curvature
along the curve $\gamma$ based on  the classical Gauss--Bonnet theorem.

\begin{figure}[H]
\begin{center}
\begin{tikzpicture}[scale=2.4]
\draw[shift={(1.424,-0.78)}, scale=1.5, rotate=-150]
(0,0)to[out= -45,in=135, looseness=1] (0.05,-0.05)
(0,0)to[out= -135,in=45, looseness=1] (-0.05,-0.05)
(0,-0.3)to[out= 90,in=-90, looseness=1] (0,0.003);
 \draw[shift={(1.424,-.01)}, scale=1.5, rotate=-30]
(0,0)to[out= -45,in=135, looseness=1] (0.05,-0.05)
(0,0)to[out= -135,in=45, looseness=1] (-0.05,-0.05)
(0,-0.3)to[out= 90,in=-90, looseness=1] (0,0.003);
 \draw[shift={(0.42,-0.85)}, scale=1.5, rotate=-70]
(0,0)to[out= -45,in=135, looseness=1] (0.05,-0.05)
(0,0)to[out= -135,in=45, looseness=1] (-0.05,-0.05)
(0,-0.3)to[out= 90,in=-90, looseness=1] (0,0.003);
\draw[shift={(0.42,-1.152)}, scale=1.5, rotate=-110]
(0,0)to[out= -45,in=135, looseness=1] (0.05,-0.05)
(0,0)to[out= -135,in=45, looseness=1] (-0.05,-0.05)
(0,-0.3)to[out= 90,in=-90, looseness=1] (0,0.003);

\draw[black!60!white, thick, shift={(0,-1)}]
(0,0)to[out= 20,in=-90, looseness=1] (0.7,0.45)
(0,0)to[out= 160,in=-90, looseness=1] (-0.9,0.65)
(0.7,0.45)to[out= 90,in=-130, looseness=1] (1.2,0.6)
(1.2,0.6)to[out= -60,in=180, looseness=1] (1.8,0)
(1.8,0)to[out= 0,in=-120, looseness=1] (2.6,0.3);
\draw[black!60!white, thick,, shift={(2,-1)}, scale=1, rotate=-90]
(0,0)to[out= -135,in=45, looseness=1] (-0.05,-0.05)
(0,0)to[out= -45,in=135, looseness=1] (0.05,-0.05);
\draw[black!60!white, thick, shift={(-0.88,-0.5)}, scale=1, rotate=-160]
(0,0)to[out= -135,in=45, looseness=1] (-0.05,-0.05)
(0,0)to[out= -45,in=135, looseness=1] (0.05,-0.05);

\fill[black](0,-1) circle (0.5pt);
\fill[black](1.2,-0.4) circle (0.5pt);
\path[shift={(0.75,-0.56)}] 
 (0.13,0)[right] node{\tiny{$\gamma(x_j)$}};
\path[shift={(-0.5,-1.1)}] 
 (0.13,0)[right] node{\tiny{$\gamma(x_i)$}};
 
 \path[shift={(0.75,-0.56)}] 
 (0.55,0.15)[right] node{\tiny{$\tilde{\theta}_j$}};
\path[shift={(0.1,-1)}] 
(0.13,0)[right] node{\tiny{$\tilde{\theta}_i$}};
 
 \draw[color=black,scale=0.05,domain=1.3: 1.9,
smooth,variable=\t,shift={(0,-20)},rotate=0]plot({2.*sin(\t r)},
{2.*cos(\t r)}) ; 
\draw[black, shift={(0.1,-1)}, scale=0.3, rotate=0]
(0,0)to[out= -135,in=45, looseness=1] (-0.05,-0.05)
(0,0)to[out= -45,in=135, looseness=1] (0.05,-0.05);

 \draw[color=black,scale=0.05,domain=0.5: 2.7,
smooth,variable=\t,shift={(24,-8)},rotate=0]plot({2.*sin(\t r)},
{2.*cos(\t r)}) ; 
\draw[ shift={(1.3,-0.4)}, scale=0.3, rotate=180]
(0,0)to[out= -135,in=45, looseness=1] (-0.05,-0.05)
(0,0)to[out= -45,in=135, looseness=1] (0.05,-0.05);
\path[shift={(-1,-0.5)}] 
 (0.13,0)[right] node{$\gamma$};
\end{tikzpicture}
\end{center}\caption{A curve with external angles $\tilde{\theta}_i$ and $\tilde{\theta}_j$.} 
\end{figure}
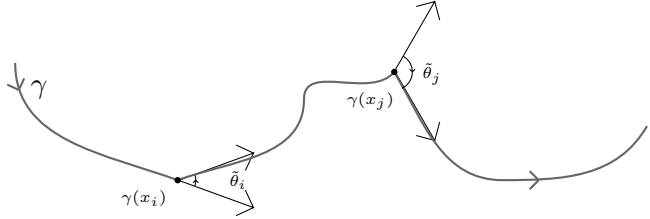

Let $0=x_0<x_1<x_2\ldots<x_N
=1$ be a partition of $[0,1]$
such that $\gamma$ is of class $W^{2,1}$ on each interval $[x_{i-1},x_{i}]$, $i \in \{1,..,N\}$. 
We call  $\gamma(x_i)$  vertices.
In the following, for each $i$, $\theta_i$ denotes the (positive) 
angle between the unit tangent vectors
$\tau(x_i^-)=\partial_s \gamma(x_i^-)$ and $\tau(x_i^+)= \partial_s \gamma(x_i^+)$. 
More precisely,
at each vertex we fix $\theta_i\in[0,\pi]$, $i \in \{1,..,N\}$, such that
\begin{equation}\label{cosangolo}
\cos\left(\theta_i\right)=
\frac{\left\langle\gamma'(x_i^-),\gamma'(x_i^+)\right\rangle}{\vert\gamma'(x_i^-) \vert \,
\vert\gamma'(x_i^+) \vert}\,,
\end{equation}
with the usual convention that $x_N^+:=x_{0}^+$.
Notice that the angles $\theta_i$ do not depend on the parametrization and the orientation.

\begin{thm}\label{stimamodulok}
Consider a continuous, piecewise $W^{2,1}$, regular 
closed curve $\gamma:[0,1]\to\mathbb{R}^2$, 
possibly with 
angles $\theta_i$, $i\in\{1,\ldots, N\}$, at the vertices $\gamma(x_i)$. 
Then
\begin{equation}\label{general}
\int_{\gamma}\vert k \vert \,{\rm{d}}s\geq 2\pi-\sum_{i=1}^N \theta_i\,.
\end{equation}
\end{thm}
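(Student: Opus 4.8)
The plan is to reduce the statement to the classical Gauss--Bonnet theorem (or its turning--tangents version for piecewise regular curves) by approximating $\gamma$ with a \emph{smooth} curve and by carefully tracking the external angles. First I would recall the turning number formulation: for a piecewise $W^{2,1}$ regular closed curve without self--intersections bounding a region, the rotation index formula gives $\int_\gamma k\,{\rm d}s+\sum_{i=1}^N\tilde\theta_i=2\pi m$ for some integer $m$ (the turning number), where $\tilde\theta_i\in(-\pi,\pi]$ is the \emph{signed} external angle at the vertex $\gamma(x_i)$, related to the unsigned $\theta_i\in[0,\pi]$ of \eqref{cosangolo} by $|\tilde\theta_i|\le\theta_i$. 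The signed total curvature $\int_\gamma k\,{\rm d}s$ is always an integer multiple of $2\pi$ minus the angle defects, so in particular $\left|\int_\gamma k\,{\rm d}s+\sum_{i=1}^N\tilde\theta_i\right|\ge 2\pi$ unless the turning number is zero.

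The key steps, in order, would be: (1) Smooth out each corner on a tiny scale $\varepsilon$, replacing $\gamma$ by a $C^1$, piecewise $W^{2,1}$ closed curve $\gamma_\varepsilon$ that coincides with $\gamma$ outside $\varepsilon$--neighborhoods of the vertices and inside each such neighborhood contributes total curvature exactly equal to $\pm\tilde\theta_i$ (the turning across the corner). (2) Apply the turning--tangents theorem to $\gamma_\varepsilon$: its turning number $m$ is a nonzero integer \emph{provided} $\gamma_\varepsilon$ — hence $\gamma$ — is actually a closed loop, so $\int_{\gamma_\varepsilon}k\,{\rm d}s=2\pi m$, giving $\int_{\gamma_\varepsilon}|k|\,{\rm d}s\ge|2\pi m|\ge 2\pi$ when $m\ne 0$. (3) Split $\int_{\gamma_\varepsilon}|k|\,{\rm d}s$ into the contribution away from the corners, which converges to $\int_\gamma|k|\,{\rm d}s$ as $\varepsilon\to 0$, and the contribution near each corner, which is at least $|\tilde\theta_i|$ by the triangle inequality but can be made as close as we wish to $|\tilde\theta_i|\le\theta_i$ by a monotone (non--backtracking) smoothing. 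Passing to the limit yields $\int_\gamma|k|\,{\rm d}s+\sum_{i=1}^N\theta_i\ge\int_\gamma|k|\,{\rm d}s+\sum_i|\tilde\theta_i|\ge 2\pi$, which is \eqref{general}. (4) Handle the degenerate case $m=0$ separately: if the turning number is zero then $\sum_i\tilde\theta_i=-\int_\gamma k\,{\rm d}s$, and since each $|\tilde\theta_i|\le\theta_i\le\pi$ one still needs the bound — here I would note that a closed curve with zero total signed curvature and $N$ corners must have $\sum_i\theta_i\ge\sum_i|\tilde\theta_i|\ge|\sum_i\tilde\theta_i|=\left|\int_\gamma k\,{\rm d}s\right|$, and combined with $\int_\gamma|k|\,{\rm d}s\ge\left|\int_\gamma k\,{\rm d}s\right|$ this is not automatically $\ge 2\pi-\sum_i\theta_i$; so I would instead argue directly that if $\int_\gamma|k|\,{\rm d}s<2\pi-\sum_i\theta_i$ then the tangent indicatrix $\tau:[0,1]\to\mathbb S^1$ has total variation (including jumps) strictly less than $2\pi$, forcing $\tau$ to stay in an open half--circle, which makes $\gamma$ strictly monotone in some direction and hence \emph{not} closed — a contradiction.

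The main obstacle is Step 3/Step 4: making rigorous that the corner--smoothing can be done so that the extra total curvature it introduces is arbitrarily close to $|\tilde\theta_i|$ (one must avoid creating spurious oscillation of the tangent, i.e. the smoothing must be ``monotone'' in the angle variable), and handling self--intersections — since $\gamma$ need not be embedded, the naive Gauss--Bonnet/Jordan--curve argument does not apply directly and one must use the analytic rotation--index form of the theorem, which holds for immersed closed curves regardless of self--intersections. I expect the cleanest route is to work entirely with the lifted tangent angle function: write $\tau(s)=(\cos\phi(s),\sin\phi(s))$ on each smooth arc with $\phi$ absolutely continuous and $\phi'=k$, record the jump $\Delta_i\phi\in(-\pi,\pi]$ at each vertex with $|\Delta_i\phi|\le\theta_i$, observe that $\phi$ closes up to $2\pi\mathbb Z$ around the loop so that $\int_\gamma k\,{\rm d}s+\sum_i\Delta_i\phi\in 2\pi\mathbb Z$, and show this integer is nonzero because $\gamma$ is a nonconstant closed loop (if it were zero, $\phi$ would return to its start and, with total variation under $2\pi$, would be confined to an arc of $\mathbb S^1$ of length $<2\pi$, i.e. $\tau$ misses an open arc, which by integrating forces $\gamma(1)\ne\gamma(0)$). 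Then $\int_\gamma|k|\,{\rm d}s\ge\left|\sum_i\Delta_i\phi\right|\pm 2\pi\cdot(\text{nonzero integer})$ combined with $|\Delta_i\phi|\le\theta_i$ gives exactly \eqref{general}.
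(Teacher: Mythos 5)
Your lifted-angle computation handles only part of the statement, and the part it misses is the essential one. Writing $\tau=(\cos\phi,\sin\phi)$ with $\phi'=k$ on each arc and jumps $\Delta_i\phi\in(-\pi,\pi]$, $|\Delta_i\phi|\le\theta_i$, the closing-up condition indeed gives $\int_\gamma k\,{\rm d}s+\sum_i\Delta_i\phi=2\pi m$ for some integer $m$, and for $m\ne 0$ your chain $\int|k|\,{\rm d}s\ge|2\pi m-\sum_i\Delta_i\phi|\ge 2\pi-\sum_i\theta_i$ is correct. But $m=0$ is not a degenerate side case: it occurs precisely for curves like the ``Figure Eight'' and the double drops, i.e.\ exactly the curves to which Theorem~\ref{stimamodulok} is applied in this paper. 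Your treatment of that case rests on a false implication: total variation of $\phi$ (including jumps) strictly less than $2\pi$ only confines $\tau$ to an arc of length $<2\pi$, not to an open half-circle, and ``$\tau$ misses an open arc'' does \emph{not} force $\gamma(1)\ne\gamma(0)$ — the tangent image of a closed polygon is finitely many points, and the boundary of a half-disc is a closed curve whose tangent image omits two open quarter-circles. The integration argument $\int\tau\,{\rm d}s=0$ yields a contradiction only when $\tau$ lies in an open half-circle, i.e.\ when the total variation is $<\pi$. So in the $m=0$ case what you actually need is Fenchel's theorem for closed curves with corners, $\int|k|\,{\rm d}s+\sum_i\theta_i\ge 2\pi$, which is essentially the statement \eqref{general} itself; it requires a genuine argument (for instance: for every direction $e\in\mathbb{S}^1$ the height function $\langle\gamma(\cdot),e\rangle$ attains an interior maximum and minimum, where $\tau$ either equals $\pm e^{\perp}$ or jumps across it, and a Crofton-type count over $e$ then gives total angular length at least $2\pi$), together with a check that it survives $W^{2,1}$ regularity.

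For comparison, the paper sidesteps the zero-rotation-index difficulty entirely: after an approximation step (Appendix B) reducing to piecewise smooth curves with finitely many transversal self-intersections, none at the vertices, it argues by induction on the number of self-intersections. The base case is a simple closed curve, where the classical Gauss--Bonnet theorem with signed exterior angles applies (rotation index $\pm1$, so $m=0$ never enters); at the inductive step a crossing is used to split $\gamma$ into two closed sub-loops, each acquiring one extra corner of amplitude $\beta\le\pi$, and summing the two estimates costs $2\beta\le 2\pi$, which is exactly absorbed by the extra $2\pi$. A viable repair of your proof is to graft this device onto your $m=0$ case — with the correctly signed exterior angles a simple closed curve has index $\pm1$, so $m=0$ forces a self-intersection, at which you can split and induct — but that is, in substance, the paper's proof rather than a shortcut around it.
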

\begin{proof}
We first suppose that 
the closed curve $\gamma:[0,1]\to\mathbb{R}^2$ is piecewise $C^\infty$, regular,
possibly with 
angles $\theta_i$, $i\in\{1,\ldots, N\}$,
at the vertices $\gamma(x_i)$ and possibly with finitely many self--intersection
but not at the vertices $\gamma(x_i)$.
Then the claim in the general case follows by approximation, 
combining the fact that $C^k$ is dense in $W^{2,1}$
and the density of the considered class of curve into rectifiable curves. This approximation argument is given in detail in Appendix \ref{sect:approxi}.

\noindent
We prove the statement by induction on the number of self--intersections of $\gamma$. 

\noindent
Suppose first that the curve $\gamma$ has no self-intersections and hence 
$\gamma$ is a simple closed curve. The claim then follows from the classical Gauss--Bonnet 
Theorem~\cite[pag. 269]{docarmo}. 
Let us shortly discuss the ideas. Without loss of generality 
we can assume that the curve is positively oriented.
For all $\theta_i\in[0,\pi)$, $i\in \{1, ..,N\}$, we define the external
angles $\tilde{\theta}_i$ as follows:
$\tilde{\theta}_i=\theta_i$ if $\gamma'(x_i^+)$ is obtained by a counterclockwise
rotation of $\gamma'(x_i^-)$
and $\tilde{\theta}_i=-\theta_i$ otherwise (and
we refer to~\cite[pages 265--267]{docarmo} for the more delicate definition of $\tilde{\theta}_i$
in the case $\theta_i=\pi$). Then, $\tilde{\theta}_i\in[-\pi,\pi]$.
The classical Gauss--Bonnet 
Theorem~\cite[pag. 269]{docarmo} yields
\begin{equation*}
\int_{\gamma} k  \,{\rm{d}}s= 2\pi-\sum_{i=1}^N \tilde{\theta}_i\,.
\end{equation*}
Hence
\begin{equation*}
\int_{\gamma} \vert k \vert \,{\rm{d}} s\geq 
\int_{\gamma} k  \,{\rm{d}}s= 2\pi-\sum_{i=1}^N \tilde{\theta}_i\geq 
2\pi-\sum_{i=1}^N \theta_i\,.
\end{equation*}

\noindent
Let us assume 
now that Formula~\eqref{general} holds true for every curve with at most $n$
self--intersections and consider a 
curve $\gamma$ with $n+1$ self--intersections.
Then we can
decompose $\gamma$ as union of  two curves,
$\gamma^1,\gamma^2:[0,1]\to\mathbb{R}^2$, 
each with at most $n$ self--intersections, so that:
$\gamma^1$ has
$M$  angles $\theta_i$ ($M\leq N$) 
and an angle $\beta$ (with $0\leq \beta\leq \pi$) 
at $\gamma^1(0)=\gamma^1(1)$
and the curve $\gamma^2:[0,1]\to\mathbb{R}^2$ has
$N-M$  angles and an angle $\beta$ 
(with $0\leq\beta\leq \pi$) 
at $\gamma^2(0)=\gamma^2(1)$.
Then by the induction assumption
we get 
\begin{align*}
\int_{\gamma}\vert k \vert \,{\rm{d}}s=&\int_{\gamma^1}\vert k \vert \,{\rm{d}}s+\int_{\gamma^2}\vert k \vert \,{\rm{d}}s
\geq 2\pi-\sum_{i=1}^M \theta_i-\beta+ 2\pi-\sum_{i=M+1}^N \theta_i-\beta\\
=&4\pi-\sum_{i=1}^N \theta_i -2\beta\geq 2\pi-\sum_{i=1}^N \theta_i \,.
\end{align*}

\end{proof}

\begin{cor}\label{variantGB1curve}
Consider a regular curve $\gamma:[0,1]\to\mathbb{R}^2$ 
of class $W^{2,1}$, 
such that $\gamma(0)=\gamma(1)$.
Then 
$$
\int_{\gamma}\vert k \vert \,{\rm{d}}s\geq \pi\,.
$$
\end{cor}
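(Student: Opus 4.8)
The plan is to deduce Corollary~\ref{variantGB1curve} directly from Theorem~\ref{stimamodulok} applied to the closed curve $\gamma:[0,1]\to\mathbb{R}^2$ with $\gamma(0)=\gamma(1)$, treating the point $\gamma(0)=\gamma(1)$ as the only possible vertex. Indeed, $\gamma$ is continuous, piecewise $W^{2,1}$ (in fact $W^{2,1}$ on $[0,1]$, so there are no interior vertices) and regular. There is at most one vertex, namely $\gamma(x_0)=\gamma(x_N)=\gamma(0)=\gamma(1)$, with associated angle $\theta_0=\theta_N\in[0,\pi]$ measured between $\gamma'(1^-)$ and $\gamma'(0^+)$ according to~\eqref{cosangolo}.

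So first I would invoke Theorem~\ref{stimamodulok} with $N=1$ to obtain
\begin{equation*}
\int_{\gamma}\vert k \vert \,{\rm{d}}s\geq 2\pi-\theta_1\,.
\end{equation*}
Since by definition $\theta_1\in[0,\pi]$, we have $2\pi-\theta_1\geq 2\pi-\pi=\pi$, which gives
\begin{equation*}
\int_{\gamma}\vert k \vert \,{\rm{d}}s\geq \pi\,,
\end{equation*}
as desired. If no angle is present at $\gamma(0)=\gamma(1)$ (i.e. the curve is genuinely closed and $C^1$ there), then $\theta_1=0$ and one even gets the stronger bound $2\pi$, consistently with the fact that the total curvature of a closed curve has absolute value at least $2\pi$.

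The argument is essentially immediate once Theorem~\ref{stimamodulok} is available, so I do not expect a genuine obstacle here; the only point requiring a little care is the bookkeeping of vertices, namely checking that a curve closed merely by $\gamma(0)=\gamma(1)$ falls under the hypotheses of Theorem~\ref{stimamodulok} with exactly one (possibly trivial) vertex, so that the sum $\sum_{i=1}^N\theta_i$ reduces to the single term $\theta_1\leq\pi$. This is exactly the convention $x_N^+:=x_0^+$ already fixed before the statement of the theorem, so no additional work is needed.
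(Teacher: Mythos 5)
Your proposal is correct and coincides with the paper's own argument: the paper likewise notes that the only angle is the one at $\gamma(0)=\gamma(1)$, with amplitude $\theta\in[0,\pi]$, and applies~\eqref{general} to get $\int_{\gamma}\vert k\vert\,{\rm{d}}s\geq 2\pi-\theta\geq\pi$. No further comment is needed.
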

\begin{proof}
Since $\gamma$ has only an angle 
with arbitrary amplitude $\theta\in [0,\pi]$ the claim follows directly from~\eqref{general}.
\end{proof}

\begin{cor}\label{variantGB}
Consider a regular closed curve $\gamma:[0,1]\to\mathbb{R}^2$ 
piecewise
of class $W^{2,1}$,
with two equal angles $\theta_i=\frac{\pi}{3}$.
Then 
$$
\int_{\gamma^1\cup\gamma^2}\vert k \vert \,{\rm{d}}s\geq \frac{4\pi}{3}\,.
$$
\end{cor}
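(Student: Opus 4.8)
The plan is to apply Theorem~\ref{stimamodulok} directly to the closed curve $\gamma^1\cup\gamma^2$, which is a regular, continuous, piecewise $W^{2,1}$ closed curve with exactly two vertices, namely the two triple junctions where the two curves meet. At each of these vertices the angle between the incoming and outgoing tangent vectors is $\theta_i=\tfrac{\pi}{3}$ by hypothesis, since the three curves of a Theta--network meet at $120$ degrees and so any two of them form an angle of $\tfrac{2\pi}{3}$ on one side; the angle $\theta_i$ as defined in~\eqref{cosangolo} (the angle in $[0,\pi]$ between $\gamma'(x_i^-)$ and $\gamma'(x_i^+)$) is then $\pi-\tfrac{2\pi}{3}=\tfrac{\pi}{3}$ when one traverses $\gamma^1$ and then $\gamma^2$ through the junction. (Here I am implicitly using that $\gamma^1\cup\gamma^2$ can be parametrized as a single closed curve by following $\gamma^1$ from $P^1$ to $P^2$ and then $\gamma^2$ back from $P^2$ to $P^1$, reversing orientation if necessary; the angles $\theta_i$ do not depend on parametrization or orientation, as remarked after~\eqref{cosangolo}.)

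With $N=2$ vertices and $\theta_1=\theta_2=\tfrac{\pi}{3}$, inequality~\eqref{general} of Theorem~\ref{stimamodulok} gives immediately
\begin{equation*}
\int_{\gamma^1\cup\gamma^2}\vert k\vert\,{\rm d}s\geq 2\pi-\theta_1-\theta_2=2\pi-\frac{2\pi}{3}=\frac{4\pi}{3}\,,
\end{equation*}
which is exactly the claimed bound.

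There is essentially no obstacle here: the only thing to be careful about is the bookkeeping of the angle at the junctions — confirming that the relevant $\theta_i$ in the sense of~\eqref{cosangolo} is $\tfrac{\pi}{3}$ rather than $\tfrac{2\pi}{3}$. This is just the observation that the $120$-degree condition of a Theta--network refers to the angle measured inside a "sector" between two of the three tangent directions, whereas $\theta_i$ in Theorem~\ref{stimamodulok} is the turning angle (the supplement), so it equals $\pi-\tfrac{2\pi}{3}=\tfrac{\pi}{3}$. Once this identification is made, the corollary follows in one line from~\eqref{general}.
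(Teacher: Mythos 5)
Your proof is correct and follows the same route as the paper: Corollary~\ref{variantGB} is a direct application of inequality~\eqref{general} from Theorem~\ref{stimamodulok} with $N=2$ and $\theta_1=\theta_2=\tfrac{\pi}{3}$, giving $2\pi-\tfrac{2\pi}{3}=\tfrac{4\pi}{3}$. Your additional check that the turning angle in the sense of~\eqref{cosangolo} at each junction is $\tfrac{\pi}{3}$ (the supplement of the $120$-degree angle, since one of the two curves is traversed with reversed orientation) is a correct and useful clarification of what the paper leaves implicit.
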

\begin{proof}
Also in this case the statement is a direct consequence of~\eqref{general}.
\end{proof}


\section{An approximation lemma}\label{sect:approxi}

In this section we give the details of the approximation argument used in the proof of Theorem \ref{stimamodulok}. The concept of \textit{transversality} of curves turns out to be crucial in our argument.

\begin{defn}
Consider two curves $\gamma,\tilde{\gamma}:[0,1]\to\mathbb{R}^2$
of class $C^k$ (with $k=1,2,\ldots$). We say that
\begin{enumerate}
\item A curve $\gamma$
is \emph{self--transversal} if at each self--intersection
$\gamma(x)=\gamma(\tilde{x})$ with $x\neq \tilde{x}$
the derivatives $\gamma'(x)$ and $\gamma'(\tilde{x})$
are linearly independent.
\item Two curves $\gamma,\tilde{\gamma}$ are \emph{transversal} if at each intersection
$\gamma(x)=\tilde{\gamma}(\tilde{x})$ the derivatives
$\gamma'(x)$ and $\tilde{\gamma}'(\tilde{x})$ are linearly independent.
\item Consider a piecewise $C^k$  curve  
$\gamma:[0,1]\to\mathbb{R}^2$ and 
let $0=x_0<x_1<x_2\ldots<x_N=1$ be a partition of $[0,1]$
such that $\gamma_{\vert [x_{i-1},x_{i}]}$  is of class $C^k$ for $i \in \{1,..,N\}$.
Then $\gamma$ is \textit{self--transversal} if 
 $\gamma_{\vert [x_{i-1},x_{i}]}$ 
is self--transversal for every $i\in\{1,\ldots, N\}$ and the curves $\gamma_{\vert [x_{i-1},x_{i}]}$ 
and $\gamma_{\vert [x_{j-1},x_{j}]}$
are transversal for $i \ne j$, $i,j \in \{1,..,N\}$.
\end{enumerate}
\end{defn}

\begin{rem} 
Notice that if a  curve $\gamma$
is  \emph{regular}, then
every point in the image of the curve has a finite number of preimages.
Moreover, a compactness argument yields that 
 every \emph{self--transversal} curve has a finite number of self--intersections.
Similarly, if two curves $\gamma:[0,1]\to\mathbb{R}^2$
and $\tilde{\gamma}:[0,1]\to\mathbb{R}^2$
are regular and transversal, then they have a finite number of
intersections and the number of couples $(x,\tilde{x})$
with $x,\tilde{x}\in[0,1]$ and 
$\gamma(x)=\tilde\gamma(\tilde x)$ is finite.
\end{rem}

In the next result we describe how to approximate 
a piecewise $C^k$  
regular 
closed curve, 
$\gamma:[0,1]\to\mathbb{R}^2$,
possibly with angles $\theta_i$ ($i\in\{1,\ldots, N\}$)
at the vertices $\gamma(x_i)$, 
with
closed curves 
$\tilde{\gamma}:[0,1]\to\mathbb{R}^2$ 
which are
piecewise $C^\infty$, regular,
and with finitely many self--intersections
(but non at the vertices). 
%
%
%
%
\begin{lem}[Approximation]\label{approx}
Let $k = 1,2,\ldots$,
consider a  continuous closed curve  
$\gamma:[0,1]\to\mathbb{R}^2$ 
and 
let $0=x_0<x_1<x_2\ldots<x_N=1$ be a partition of $[0,1]$
such that 
$\gamma$ is of class $C^k$ and regular
on each interval $[x_{i-1},x_{i}]$.
Then
 for every $\varepsilon>0$ there exists $\widetilde{\gamma}:[0,1]\to\mathbb{R}^2$
 an approximating 
continuous  closed curve which is
   of class $C^\infty$ and regular on each interval $[x_{i-1},x_{i}]$
and 
such that
\begin{enumerate}
\item $\Vert \gamma-\widetilde{\gamma}\Vert_{C^k[x_{i-1},x_{i}]}\leq \varepsilon$;
\item $\widetilde{\gamma}$ is self--transversal;
\item $\widetilde{\gamma}$ has no self--intersections at the vertices $\widetilde{\gamma}(x_i)$, $i \in\{1,..,N\}$.
\end{enumerate}
\end{lem}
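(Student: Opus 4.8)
The plan is to build $\widetilde{\gamma}$ in two stages: first regularise each $C^k$--piece to a $C^\infty$--piece while matching the endpoints (and hence the vertices) of $\gamma$ exactly, and then apply a small generic perturbation to achieve self--transversality and to push apart any self--intersections that happen to sit at the vertices. Throughout, all estimates are in $C^k$ on the closed subintervals $[x_{i-1},x_i]$, so the maps are continuous across the vertices $\gamma(x_i)$ by construction and the pieces stay regular because $C^k$--closeness preserves $|\partial_x\widetilde{\gamma}|>0$ for $\varepsilon$ small (using $\inf_{[x_{i-1},x_i]}|\partial_x\gamma|>0$ on each compact piece).

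\emph{Step 1 (mollification with fixed endpoints).} On each interval $[x_{i-1},x_i]$ write $\gamma = \ell_i + \eta_i$, where $\ell_i$ is the affine interpolant of the two endpoint values $\gamma(x_{i-1}),\gamma(x_i)$ and $\eta_i$ vanishes at both endpoints. Extend $\eta_i$ by $0$ outside $[x_{i-1},x_i]$ and convolve with a standard mollifier $\rho_\delta$; since $\eta_i\in C^k$ and $C^k$ is dense in itself under mollification on compacts, $\rho_\delta * \eta_i \to \eta_i$ in $C^k[x_{i-1},x_i]$ as $\delta\to 0$. To restore the exact endpoint values one subtracts a smooth correction of size $O(\|\rho_\delta*\eta_i-\eta_i\|_{C^0})$ supported near the endpoints --- e.g. $(\rho_\delta*\eta_i)(x_{i-1})\,\psi(x)$ with $\psi$ a fixed cutoff equal to $1$ near $x_{i-1}$ and $0$ near $x_i$, and similarly at $x_i$; these corrections are also small in $C^k$. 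Setting $\widetilde{\gamma}_0 := \ell_i + (\text{corrected mollification of }\eta_i)$ on each piece gives a continuous closed curve, $C^\infty$ and regular on each $[x_{i-1},x_i]$, with $\|\gamma - \widetilde{\gamma}_0\|_{C^k[x_{i-1},x_i]} \le \varepsilon/2$ and $\widetilde{\gamma}_0(x_i)=\gamma(x_i)$ for all $i$.

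\emph{Step 2 (generic perturbation for transversality).} Now I perturb each smooth piece $\widetilde{\gamma}_0\!\mid_{[x_{i-1},x_i]}$ by $v\mapsto \widetilde{\gamma}_0 + \sum_j a_{ij}\,w_{ij}$, where the $w_{ij}$ form a finite family of smooth vector fields vanishing (together with enough derivatives) at $x_{i-1},x_i$ --- so the vertices and the continuity of the closed curve are untouched --- and the coefficients $a_{ij}$ range over a small ball in $\mathbb{R}^m$. For fixed $i$, consider the self--intersection map $(x,\tilde x,a)\mapsto \widetilde{\gamma}_a(x)-\widetilde{\gamma}_a(\tilde x)$ on $\{x\ne\tilde x\}$; for $i\ne j$ the analogous map for intersections of the $i$-th and $j$-th pieces; and the map sending $(x,\tilde x,a)$ with $\widetilde{\gamma}_a(x)=\widetilde{\gamma}_a(\tilde x)$ to the $2\times 2$ determinant $\det(\partial_x\widetilde{\gamma}_a(x),\partial_x\widetilde{\gamma}_a(\tilde x))$, and finally the maps detecting a self--intersection occurring at some $x_i$. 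Choosing the $w_{ij}$ rich enough that these maps are submersions in the $a$--variable, the parametric transversality theorem (Thom / Sard) shows that for almost every small $a$ the curve $\widetilde{\gamma}_a$ is self--transversal and has no self--intersection at any vertex. Picking such an $a$ with $\|\sum_j a_{ij}w_{ij}\|_{C^k[x_{i-1},x_i]}\le\varepsilon/2$ yields the desired $\widetilde{\gamma}:=\widetilde{\gamma}_a$, and the triangle inequality gives $\|\gamma-\widetilde{\gamma}\|_{C^k[x_{i-1},x_i]}\le\varepsilon$.

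The main obstacle is the bookkeeping in Step 2: one must verify that the chosen finite family of variations $w_{ij}$ actually makes all the relevant evaluation maps submersive simultaneously --- in particular the transversality condition at self--intersections (a determinant being nonzero is an \emph{open} generic condition, which is easy) and the \emph{avoidance} of the finitely many vertices (a closed codimension--one condition, so a generic arbitrarily small perturbation removes it). Because a regular, self--transversal piece has only finitely many self--intersections and two transversal regular pieces meet in finitely many points (as noted in the Remark preceding the lemma), all these conditions involve only finitely many constraints, so a single small perturbation can be made to satisfy all of them at once; this is where the transversality/Sard machinery does the real work.
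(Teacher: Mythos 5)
Your overall architecture (smooth each piece, then make a small generic perturbation) is reasonable, but the decision to \emph{freeze the vertices} --- matching $\widetilde{\gamma}_0(x_i)=\gamma(x_i)$ exactly in Step 1 and then only allowing perturbations $w_{ij}$ that vanish (with derivatives) at $x_{i-1},x_i$ in Step 2 --- creates a genuine gap with respect to conclusion 3. The hypotheses allow two distinct vertices to occupy the same point of the plane, $\gamma(x_i)=\gamma(x_j)$ with $i\neq j$ (this is not a pathological case in this paper: the closed curve formed by a double drop, i.e.\ a ``degenerate'' Theta--network, has both of its vertices at the four--point $P$). For such a curve every perturbation in your admissible family keeps $\widetilde{\gamma}_a(x_i)=\widetilde{\gamma}_a(x_j)$, so the self--intersection sits at a vertex for \emph{all} parameters $a$, and no genericity argument can remove it: your claim that vertex--avoidance is ``a codimension--one condition removed by a generic perturbation'' fails exactly where it is needed, because the evaluation map is not submersive in $a$ at parameters $\tilde x$ that are themselves vertices. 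The same freezing of the one--sided tangents at the vertices also prevents you from making adjacent pieces transversal at their common endpoint when the angle there is $0$ or $\pi$, which the paper's definition of a self--transversal piecewise curve requires. The paper's proof avoids all of this by doing the opposite of what you do at the vertices: it \emph{moves} the vertex, adding $v\psi$ with $\psi\equiv 1$ near $x_i$ and choosing the translation $v$ so small that the $C^k$ error is controlled, the piece stays regular, and --- crucially --- the new vertex $\gamma(x_i)+v$ lies outside the image of $\gamma$, which is possible because $\mathcal{H}^2(\mathrm{Im}(\gamma))=0$; only afterwards does it invoke the open--and--dense theorems of Hirsch to get self--transversality piece by piece.

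Two further points. First, your parametric--transversality machinery in Step 2 is a legitimate alternative to quoting Hirsch (it is essentially the proof of those density theorems), and it has the virtue of making the endpoint bookkeeping explicit, which the paper leaves implicit; but to repair conclusion 3 you must allow some perturbation that genuinely displaces the vertices (after which you can no longer insist that the vertices of $\widetilde\gamma$ coincide with those of $\gamma$ --- the lemma does not require this, so a first ``move the vertices off $\mathrm{Im}(\gamma)$'' step, as in the paper, is the natural fix). Second, a smaller technical slip in Step 1: mollifying $\eta_i$ extended by zero does not converge in $C^k$ up to the endpoints (the zero extension is not $C^1$ there unless $\eta_i'(x_{i-1}^+)=0$); you should first extend $\eta_i$ to a $C^k$ function on a neighbourhood of $[x_{i-1},x_i]$ and then mollify, before applying your endpoint correction.
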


\begin{proof}
{\it{Step 1: Construction of an approximating curve without self--intersections at the vertices}}\\ Since there are only finitely many vertices it is sufficient to consider each one separately. 
Suppose that $\gamma$ has self--intersections at the vertex $\gamma(x_1)$.
Since $\gamma$ is regular, $\partial_x \gamma(x_1^-)\neq 0$.
Then at least one component of $\partial_x \gamma (x_1^-)$ is different from zero
and, by continuity, there exists $\eta_1>0$ such that this component of $\partial_x \gamma(x)$
is different from zero
for all $x\in [x_1-\eta_1, x_1)$ and also satisfying $x_1-\eta_1\geq \frac12 x_1$.
Similarly $\partial_x \gamma(x_1^+)\neq 0$, hence we can find $\eta_2>0$ such that $x_1+\eta_2 \leq \frac12(x_1+x_2)$
 and 
such that  $\gamma(x)\neq \gamma(x_1)$ for all $x\in [x_1, x_1+\eta_2]$.\\
Let $\eta$ be equal to the smallest number between $\eta_1$ and $\eta_2$ and take 
$\psi:[0,1]\to[0,1]$ of class $C^\infty$ with support contained in  
$((x_1-\eta, x_1+\eta)$ and 
equal to one in $(x_1-\frac12 \eta, x_1+ \frac12 \eta)$ and otherwise taking values between $0$ and $1$.
Consider now the curve $\hat\gamma=\gamma+v\psi$ where 
the vector $v$ in $\mathbb{R}^2$ should be chosen such that 
\begin{enumerate}
\item $\vert v\vert \,\Vert \psi\Vert_{C^k([x_1-\eta,x_{1}+\eta])}\leq \frac14 \varepsilon$;
\item $\hat\gamma(x_0)\notin \mathrm{Im}(\gamma)$;
\item for every $x\in[x_1-\eta,x_1]$ (respectively in $[x_1,x_1+\eta]$)
a component of $\partial_x \hat{\gamma}$ (the same of $\partial_x \gamma$)
 is different from zero.
\end{enumerate}
Condition $1.$ and $3.$ are satisfied by choosing $v$ small enough
and as $\mathcal{H}^2(\mathrm{Im}(\gamma))=0$ all conditions can be achieved. 
Then 
\begin{itemize}
\item condition $1.$ implies that 
$\Vert \gamma-\hat{\gamma}\Vert_{C^k([x_{i-1},x_{i}])}\leq \frac14 \varepsilon$ for $i=1,2$;
\item condition $2.$ implies that 
$\hat\gamma(x_1)\neq \hat\gamma(x)$ with 
$x\in ([0,1]\setminus [x_1-\eta,x_1+\eta])$;
\item condition $3.$ implies that 
$\hat\gamma(x_1)\neq \hat\gamma(x)$ with 
$x\in  [x_1-\eta,x_1+\eta]$.
\end{itemize}
Hence by construction $\hat{\gamma}$ has a vertex in $x_1$ but no self-intersections at this vertex.

\noindent 
If $\hat{\gamma}$ has self--intersections at the vertices 
$\hat{\gamma}(x_i)$, $i\in\{2,\ldots,N\}$, one has to repeat the approximation procedure
described above at each vertex. For simplicity we still call the new approximating curve so obtained $\hat{\gamma}$.

{\it{Step 2: Construction of a self--transversal approximating curve}}\\
For every $i\in\{1,\ldots,N\}$
let $\hat{\gamma}^i$ be the restriction of  the curve $\hat{\gamma}$ 
to the interval $[x_{i-1},x_{i}]$. For every $\varepsilon>0$  and  for every $i\in\{1,\ldots, N\}$ there exists 
a curve $\widetilde{\gamma}^i:[x_{i-1},x_{i}]\to\mathbb{R}^2$
of class $C^\infty$, self--transversal, such that for all $j\in\{1,\ldots,N\}$ with $j< i$
the curves
$\widetilde{\gamma}^i$ and $\widetilde{\gamma}^j$ are transversal 
and 
$\Vert \hat{\gamma}^i-\widetilde{\gamma}^i\Vert_{C^k([x_{i-1},x_{i}])}\leq \varepsilon/2$.
The key lemmas are the following:
the family of regular curves 
of class $C^k$ and self--transversal
is open and dense in $(C^k,\Vert\cdot\Vert_{C^k})$
(see Theorem 1.1 of Chapter 1,
Theorem 2.12 of Chapter 2 and Exercise 2 of Section 2 in Chapter 3
in~\cite{Hirsch})
and the family of all regular curves 
of class $C^k$ and transversal to
a given curve  is open and dense in $(C^k,\Vert\cdot\Vert_{C^k})$
(see Theorem 2.1(b) in Chapter 3 in~\cite{Hirsch}).\\
The desired approximating curve $\widetilde{\gamma}$ is 
given by the union of all the $\widetilde{\gamma}^i$
and therefore is  continuous,
piecewise $C^\infty$ curve and regular.
\end{proof}

\bibliographystyle{amsplain}
\bibliography{networkelastici_final}

\end{document}